\theoremstyle{plain}
\newtheorem{theorem}{Theorem}[section]
\newtheorem{lemma}[theorem]{Lemma}
\theoremstyle{definition}
\newtheorem{example}{Example}
\theoremstyle{remark}
\newtheorem{remark}{Remark}[section]
\newtheorem*{remark*}{Remark}
\newtheorem*{note*}{Note}
\DeclareMathOperator{\qr}{qr}
\DeclareMathOperator{\rank}{rank}
\DeclareMathOperator{\diag}{diag}
\DeclareMathOperator{\sgn}{sgn}
\DeclareMathOperator*{\argmax}{arg\,max}
\newcommand{\keywords}[1]{\vskip 2ex\par\noindent\normalfont{\bfseries Keywords: } #1}
\title{High-Order Implicit Low-Rank Method with Spectral Deferred Correction for Matrix Differential Equations}
\author{Shun Li
    \thanks{School of Mathematical Sciences, University of Science and Technology of China, Hefei, Anhui 230026, China.
    E-mail: {\tt lishun@mail.ustc.edu.cn}.}
\and Yan Jiang
    \thanks{School of Mathematical Sciences, University of Science and Technology of China, Hefei, Anhui 230026, China.
    Email: {\tt jiangy@ustc.edu.cn}.
    Research supported by NSFC grant 12271499.}
\and Yingda Cheng
    \thanks{Department of Mathematics, Virginia Tech,
    Blacksburg, VA 24061 U.S.A.
    E-mail:  {\tt yingda@vt.edu}.
    Research supported by DOE grant DE-SC0023164 and Virginia Tech.  }}
\date{}
\begin{document}

\maketitle

\begin{abstract}
    In this paper, we develop a low-rank method with high-order temporal accuracy using spectral deferred correction (SDC) to compute linear matrix differential equations.
    Low rank techniques exploit the fact that the solution to a differential equation can be approximated by a low-rank matrix, therefore, by only storing and operating on the low-rank factors, we can save computational cost and storage.
    The dynamic low-rank approximation (DLRA) is a well-known technique to realize such computational advantages for time-dependent problems.
    In~\cite{appelo2024robust}, a numerical method is proposed to correct the modeling error of the basis update and the Galerkin (BUG) method, which is a computational approach for DLRA.
    This method (merge-BUG/mBUG method) has been demonstrated to compute numerically convergent solutions for general advection-diffusion problems.
    However, similar to the original BUG method, it is only first-order accurate in time.

    On the other hand, SDC is a general framework to construct high-order time discretizations from low-order time discretizations.
    In this paper, we explore using SDC to elevate the convergence order of the mBUG method.
    In SDC, we start by computing a first-order solution by mBUG,
    and then perform successive updates by computing low-rank solutions to the Picard integral equation.
    Rather than a straightforward application of SDC with mBUG, we propose two aspects to improve computational efficiency.
    The first is to reduce the intermediate numerical rank by detailed analysis of dependence of truncation parameter on the correction levels.
    It turns out that  truncation tolerances depends on the correction levels, and larger tolerance can be used in the initial and early correction levels,
    which will reduce the numerical rank of the intermediate solutions.
    The second aspect is a careful choice of subspaces in the successive correction to avoid inverting large linear systems (from the K- and L-steps in BUG).
    We prove that the resulting scheme is high-order accurate for the Lipschitz continuous and bounded dynamical system.

    We further consider numerical rank control in our framework by comparing two low-rank truncation strategies:
    the hard truncation strategy by truncated singular value decomposition and the soft truncation strategy by soft thresholding.
    We demonstrate numerically that soft thresholding offers better rank control in particular for higher-order schemes for weakly (or non-)dissipative problems.
    Numerical results on benchmark tests are provided validating the performance of the method.
\end{abstract}

\keywords{dynamic low-rank approximation, implicit method, spectral deferred correction, soft thresholding, advection-diffusion equation}

\section{Introduction}\label{sec:introduction}
This paper concerns the computation of low-rank solutions with high order implicit discretizations of linear matrix differential equations.
The model equation takes the form
\begin{equation}
    \label{eqn:mode}
    \frac{d}{dt}X(t)=F(X(t),t),  \quad X(t) \in \mathbb{R}^{m_1\times m_2}, \quad X(0)=X_0,
\end{equation}
where
\[
    F(X(t),t)=\sum_{j=1}^s A_j X(t) B_j^\top +G(t),
\]
and $A_j \in \mathbb{R}^{m_1\times m_1}, B_j \in \mathbb{R}^{m_2\times m_2}$ are sparse or structured matrices where a fast matrix-vector product is known.
Further, $G(t) \in  \mathbb{R}^{m_1\times m_1} $ is a given function that is assumed to have a known low-rank decomposition.
The integer $s$ is assumed not to be too large.
Equation \eqref{eqn:mode} is assumed to be stiff to necessitate implicit methods.
Such a model arises in many applications governed by diffusion type of partial differential equations (PDE) after spatial discretizations have been deployed.
In particular, we are interested in the case when $X(t)$ admits a low-rank approximation.
For such cases, low-rank methods will achieve a significant reduction in memory footprint and computational cost.

In recent years, the low-rank approach received increased attention in numerical analysis because it offers a way to tame  the curse of dimensionality,
enabling approximations to solutions of high dimensional PDEs \cite{hackbusch2012tensor, grasedyck2013literature, khoromskij2018tensor, bachmayr2023low}.
In two dimensions, this approach reduces to the low-rank matrix method.
A popular approach for computing low-rank solutions for time-dependent problems is the dynamic low-rank approximation (DLRA) \cite{koch2007dynamical} or Dirac–Frenkel time-dependent variational principle.
It was designed to capture a fixed rank solution by projecting the equation onto the tangent space of the fixed rank low-rank manifold.
In particular, the DLRA solves
\begin{equation}
    \label{eq:DLRAF}
    \frac{d}{dt}X(t)=\mathcal{P}_{X(t)} (F(X(t), t)),
\end{equation}
where $\mathcal{P}_{X(t)}$ is the orthogonal projection onto the tangent space $T_{X(t)}\mathcal{M}_r$ of the the rank $r$ matrix manifold
$\mathcal{M}_r=\{X \in  \mathbb{R}^{m_1\times m_2}, \rank(X)=r\}$ at $X(t)$.
The current state-of-the-art for DLRA is the basis update and Galerkin  (BUG) method \cite{ceruti2022unconventional} and its rank adaptive version \cite{ceruti2022rank}.
The BUG method performs well in many applications.
However, it suffers from the modeling error from the tangent projection $\|(\mathcal{I}-\mathcal{P}_{X(t)})F(X,t)\|$,
and this error can not be easily bounded or estimated \cite{appelo2024robust, lam2024randomized}.
In \cite{appelo2024robust}, an implicit Euler based BUG method with a simple modeling error fix by merging the BUG space with the numerical subspace spanned by  $F(X,t)$ was proposed.
We call this method merge-BUG (mBUG) method.
Numerical evidence suggested that this simple modification can yield a convergent scheme for a large class of equations.
For Lipschitz continuous and bounded dynamical system, we prove the stability and convergence of the mBUG scheme.
However, similar to the original BUG method, the mBUG method is only first-order accurate in time.

The main focus of this work is the extension of the mBUG method to higher-order temporal accuracy.
High order extensions of the BUG method have been considered in \cite{nakao2023reduced,ceruti2024robust},
where the key is the enrichment of the row and column subspaces to incorporate higher-order information.
While \cite{ceruti2024robust} focused on the explicit midpoint method, \cite{nakao2023reduced} considered diagonally implicit Runge-Kutta (DIRK) and implicit-explicit (IMEX) RK method. Other approaches with high order time integrators include low rank methods using Krylov-based techniques, see e.g. \cite{el2024krylov,meng2024preconditioning}.
In this work, rather than focusing on a specific type of temporal discretizations, we consider the spectral deferred correction (SDC) method \cite{dutt2000spectral},
which is a general framework to construct high order time discretizations from low-order time discretizations.
The SDC method is based on the Picard equation and a deferred correction procedure in the integral formulation,
driven by either the explicit or the implicit Euler marching scheme \cite{dutt2000spectral}.
This method is well-studied and has been further developed in both analysis and numerical aspects,
see e.g. \cite{ Minion2003SemiimplicitSD, layton2004conservative, huang2006accelerating, hagstrom2007spectral,  christlieb2010integral, minion2011hybrid}.
However, we believe that the use of SDC with low-rank methods is new.

For the method we propose in this paper, we start by computing a first order solution by mBUG,
and then perform successive updates by computing low-rank solutions to the Picard integral equation.
Rather than a straightforward application of SDC with mBUG, we consider two aspects to improve computational efficiency.
The first is   intermediate numerical rank control.
We perform detailed analysis and derive the optimal choice of truncation parameter depending on the correction levels based on error analysis.
It turns out that large truncation tolerances can be used in the initial and early correction levels,
which will reduce the numerical rank of the intermediate solutions.
The second aspect is a careful choice of subspaces in the successive correction to avoid inverting large linear systems (from the K- and L-steps in BUG).
For correction steps, we only need to invert a small linear system from the Galerkin step.
This is inspired by the insight in \cite{huang2006accelerating} to view SDC as a preconditioned Neumann series expansion for collocation schemes.
We prove that for Lipschitz continuous and bounded dynamical system, our proposed scheme is high order accurate in time.

Beyond accuracy, an important quality of the low-rank method is if the numerical rank accurately tracks the solution rank.
This is a property that is hard to validate by theory for the rank adaptive method (unless in special circumstances e.g. DLRA for a fixed rank problem).
We consider numerical rank control of our method by comparing two low-rank truncation strategies:
the hard truncation strategy by truncated singular value decomposition (tSVD) and the soft truncation strategy by soft thresholding.
Soft thresholding is defined based on a shrinkage operator applied to the singular values \cite{cai2010singular}.
It induces a non-expansive operator and has been used in iterative solvers for linear operator equations in low-rank format \cite{bachmayr2017iterative}.
We demonstrate numerically that compared to tSVD, soft thresholding offers better rank control for time-dependent problems in particular for higher order schemes for weakly (or non-)dissipative problems.
We conduct several numerical benchmark tests arising from Schr\"{o}dinger equation and advection-diffusion equations to validate the performance of our method in terms of accuracy and rank control.

The rest of the paper is organized as follows.
Section~\ref{sec:background-review} provides a review of the background of the mBUG scheme and SDC method.
In Section~\ref{sec:numerical-method}, we describe the proposed SDC-mBUG scheme and perform theoretical analysis.
Section~\ref{sec:numerical-results} offers numerical experiments, and Section~\ref{sec:conclusions-and-future-work} concludes the paper.

\section{Background review}\label{sec:background-review}

In this section, we review some basic concepts about low-rank algorithms and the SDC time discretization scheme.
In Section~\ref{sec:preliminary}, we gather the linear algebra notation used in this paper.
Section~\ref{sec:mBUG} reviews the mBUG method.
Section~\ref{sec:sdcm} introduces the high order SDC time discretization method based on the implicit Euler scheme.

\subsection{Notations and preliminaries}\label{sec:preliminary}

In this paper, we use the inner product of the matrix defined as follows, for $A, B \in \mathbb{R}^{m_1\times m_2}$,
\[
    \langle A,B\rangle := \sum_{i=1}^{m_1} \sum_{j=1}^{m_2} A_{ij} B_{ij}.
\]
$\|\cdot\|$ is used to represent the matrix Frobenius norm, which can be derived from the inner product.
\[
    \|A\| = \sqrt{\langle A,A\rangle} = \left(\sum_{i=1}^{m_1} \sum_{j=1}^{m_2} |A_{ij}|^2\right)^{1/2}.
\]

The rank-adaptive low-rank methods rely on a low-rank matrix truncation operator to ensure that the numerical solution maintains the low-rank property.
In this paper, we denote $\mathcal{T}_\varepsilon$ as a general low-rank truncation operator from $\mathbb{R}^{m_1\times m_2}$ to $\mathbb{R}^{m_1\times m_2}$,
which may refer to either of the two truncation operators defined below or any other operator satisfying the following two properties
\begin{align*}
    \rank(\mathcal{T}_\varepsilon(X)) \le \rank(X), \quad \text{and} \quad
    \|X -\mathcal{T}_\varepsilon(X)\| \le \varepsilon.
\end{align*}
It is important to clarify that although the truncation operators described here are mappings between matrices,
the output we obtain from the algorithm is the SVD decomposition of the truncated matrix.

We define $\mathcal{T}_\varepsilon(\cdot)$ based on a shrinkage operator applied to singular values. Specifically, we consider two types of thresholding as defined below.
For $x \in \mathbb{R}$, hard thresholding with parameter $\alpha>0$ is defined as
\begin{equation*}
    \mathcal{H}_{\alpha}(x) := (1 - \chi_{[-\alpha,\alpha]}(x))x
    = \begin{cases}
        x, & x > \alpha,               \\
        0, & -\alpha \le x \le \alpha, \\
        x, & x < -\alpha,
    \end{cases}
\end{equation*}
and soft thresholding with parameter $\alpha>0$ is defined as
\begin{equation*}
    \mathcal{S}_{\alpha}(x) := \sgn(x) \max\{|x|-\alpha,0\} =
    \begin{cases}
        x-\alpha, & x > \alpha,               \\
        0,        & -\alpha \le x \le \alpha, \\
        x+\alpha, & x < -\alpha.
    \end{cases}
\end{equation*}
Consider the reduced SVD of a matrix $X \in \mathbb{R}^{m_1 \times m_2}$ of rank $r$:
\[
    X = U S V^\top = U \diag(\sigma_1, \cdots, \sigma_{r}) V^\top,
\]
where $U$ and $V$ are, respectively, $m_1 \times r$ and $m_2 \times r$ matrices with orthonormal columns,
and the singular values $\sigma_i$ are positive.
The hard and soft thresholdings for matrices are defined as follows:
\begin{align*}
    \mathcal{H}_\alpha(X) & := U \diag(\mathcal{H}_\alpha(\sigma_1), \cdots, \mathcal{H}_\alpha(\sigma_{r})) V^\top, \\
    \mathcal{S}_\alpha(X) & := U \diag(\mathcal{S}_\alpha(\sigma_1), \cdots, \mathcal{S}_\alpha(\sigma_{r})) V^\top.
\end{align*}
The soft thresholding operator is the proximal operator associated with the nuclear norm.
Details about the soft thresholding operator can be found in \cite{hiriart1996convex}.
As $\alpha>0$ increases, the ranks of the matrices $\mathcal{H}_\alpha(X)$ and $\mathcal{S}_\alpha(X)$ gradually decrease to zero.

Our low-rank algorithm requires the optimal low-rank approximation matrix under the condition that the truncation error does not exceed a given tolerance $\varepsilon$.
By defining the auxiliary functions $\Delta^h(\beta)$ and $\Delta^s(\beta)$, which are non-decreasing,
\begin{align*}
    \Delta^h(\beta) & := \| X - \mathcal{H}_\beta(X) \|^2 = \sum_{\sigma_j \le \beta} \sigma_j^2,                                   \\
    \Delta^s(\beta) & := \| X - \mathcal{S}_\beta(X) \|^2 = \sum_{\sigma_j \le \beta} \sigma_j^2 + \sum_{\sigma_j > \beta} \beta^2.
\end{align*}
we can introduce two matrix truncation operators $\mathcal{T}^h_\varepsilon,\mathcal{T}^s_\varepsilon: \mathbb{R}^{m_1\times m_2} \to \mathbb{R}^{m_1\times m_2}$
\begin{align*}
    \mathcal{T}^h_\varepsilon(X) & := \mathcal{H}_{\alpha}(X),
    \quad \alpha = \argmax \{\beta >0 \mid \Delta^h(\beta) \le \varepsilon^2 \}, \\
    \mathcal{T}^s_\varepsilon(X) & := \mathcal{S}_{\alpha}(X),
    \quad \alpha = \argmax \{\beta >0 \mid \Delta^s(\beta) \le \varepsilon^2\}.
\end{align*}
Applied to each singular value,
hard truncation provides a natural approach to obtaining low-rank approximations by dropping entries of small absolute value.
Soft truncation not only replaces entries that have an absolute value below the threshold by zero,
but also decreases values of all remaining entries. In our numerical results, we will compare the two truncation schemes and demonstrate their different performances. In the rest of the paper, $\mathcal{T}_{\varepsilon}=\mathcal{T}^{h}_{\varepsilon}$ or $\mathcal{T}^{s}_{\varepsilon}$ depending on the algorithm used.

In the low-rank algorithm,   rounding (or summation of low rank matrices) is computed using Algorithm~\ref{algo:msum} denoted by $
    U S V^\top = \mathcal{T}^{\text{sum}}_\varepsilon(\sum_{j=1}^m U_j S_j V_j^\top).
 $ Note that 
in Algorithm~\ref{algo:msum}, $\mathcal{T}_\varepsilon$ can be $\mathcal{T}^{h}_\varepsilon$ or $\mathcal{T}^{s}_\varepsilon$ . We use the following notation.

We also use the notation $\mathcal{T}^{\text{sum}}_\varepsilon \circ F$
to denote the low-rank matrix summation operator $\mathcal{T}^{\text{sum}}_\varepsilon$ applied to the linear matrix function $F$
\[
    U_F S_F V_F^\top = \mathcal{T}^{\text{sum}}_\varepsilon \circ F (X,t)
    = \mathcal{T}^{\text{sum}}_\varepsilon(\sum_{j=1}^s A_j U S (B_j V)^\top + G(t)),
\]
where $X = U S V^\top$ and $G(t) = U_G S_G V_G^\top$ are the corresponding singular value decompositions.

\begin{algorithm}[htbp]
    \caption{Sum of low rank matrices}\label{algo:msum}
    \SetKwInput{KwPara}{Parameters}

    \KwIn{SVD of low rank matrices $X_j = U_j S_j V_j^\top, j=1\ldots m$}
    \KwOut{truncated SVD of their sum, $U S V^\top = \mathcal{T}^{\text{sum}}_\varepsilon(\sum_{j=1}^m U_j S_j V_j^\top)$}
    \KwPara{tolerance $\varepsilon$}

    Form $\widehat{U}=[U_1, \ldots, U_m], \, \widehat{S}=\diag(S_1, \ldots, S_m), \,\widehat{V}=[V_1, \ldots, V_m]$.

    Perform column pivoted QR factorization: $ [Q_1, R_1, P_1]=\qr(\widehat{U}),$ $[Q_2, R_2, P_2]=\qr(\widehat{V})$.

    Compute the low rank truncation:  $USV^\top = \mathcal{T}_{\varepsilon}(R_1 P_1^\top \,\widehat{S} \, P_2 R_2^\top)$ with $\mathcal{T}_{\varepsilon}=\mathcal{T}^{h}_{\varepsilon}$ or $\mathcal{T}^{s}_{\varepsilon}$.

    Form $U \leftarrow Q_1 U, V \leftarrow Q_2 V$.

    Return $U, S, V$.
\end{algorithm}

\subsection{The mBUG method}\label{sec:mBUG}
The mBUG method was proposed in \cite{appelo2024robust} as a correction to the rank adaptive BUG method \cite{ceruti2022rank}.
BUG method belongs to DLRA \cite{koch2007dynamical}, which was originally proposed as a fixed rank method, i.e. it solves the projected equation \eqref{eq:DLRAF}, where $\mathcal{P}_{X(t)}$ is the orthogonal projection onto the tangent space $T_{X(t)}\mathcal{M}_r$ of the rank $r$ matrix manifold $\mathcal{M}_r$
at $X(t)$.
If the SVD of $X \in \mathcal{M}_r$ is given by $U S V^\top$,
then the orthogonal projection $\mathcal{P}_X$ is \cite[Lemma 4.1]{koch2007dynamical}
\begin{equation}\label{eq:PUV}
    \mathcal{P}_{X}(Z) = U U^\top Z + Z V V^\top - U U^\top Z V V^\top, \quad \forall\, Z \in \mathbb{R}^{m_1\times m_2}.
\end{equation}
The tangent space of $\mathcal{M}_r$ at $X$ is
\[
    T_{X}\mathcal{M}_r=\left \{ [ U \quad U_\perp] \begin{bmatrix}
        \mathbb{R}^{r \times r}      & \mathbb{R}^{r \times (m_2-r)} \\
        \mathbb{R}^{(m_1-r)\times r} & 0^{(m_1-r) \times (m_2-r)}
    \end{bmatrix} [V \quad V_\perp]^\top\right \},
\]
where $U_\perp, V_\perp$ are orthogonal complements of $U, V$ in $\mathbb{R}^{m_1}, \mathbb{R}^{m_2},$ respectively.

The mBUG method was proposed in \cite{appelo2024robust} to fix the modeling error from BUG method, i.e. $\|(\mathcal{I}-\mathcal{P}_{X(t)})F(X,t)\|$. The mBUG method is an implicit method. It uses the BUG framework for obtaining the row and column subspaces and it
also includes the row and column spaces predicted from the explicit step truncation method.
The main steps of the rank adaptive mBUG method with implicit Euler discretization for equation \eqref{eqn:mode}
are highlighted in Algorithm~\ref{alg:rank-adaptive-mBUG}.
In this algorithm, by selecting the truncation tolerances $\varepsilon_f=\mathcal{O}(\Delta t)$, $\varepsilon_s = \mathcal{O}(\Delta t^2)$,
the method achieves first-order accuracy in time.

\begin{algorithm}[htbp]
    \caption{mBUG method \cite{appelo2024robust} 
    }
    \label{alg:rank-adaptive-mBUG}
    \SetKwInput{KwPara}{Parameters}

    \KwIn{numerical solution at time $t_n$:
        matrix $X_n \in \mathbb{R}^{m_1\times m_2}$ with rank $r_n$ in its SVD form $X_n = U_n S_n V_n^\top$.
    }
    \KwOut{numerical solution at time $t_{n+1}$:
        low-rank matrix $X_{n+1} \in \mathbb{R}^{m_1\times m_2}$ in its SVD form $X_{n+1} = U_{n+1} S_{n+1} V_{n+1}^\top$.
    }
    \KwPara{time step $\Delta t$, truncation tolerance $\varepsilon_f=\mathcal{O}(\Delta t)$, $\varepsilon_s = \mathcal{O}(\Delta t^2)$}

    {\bf} Compute the truncated sum of the right-hand side:
    $U_{F,n}\, S_{F,n}\, V_{F,n}^\top = \mathcal{T}^{\text{sum}}_{\varepsilon_f} \circ F(X_n,t_n)$.

        {\bf(K-step and L-step)}
    Solve two generalized Sylvester equations
    \begin{align*}
        K_{n+1} & = K_n + \Delta t F(K_{n+1}(V_n)^\top,t_{n+1}) V_n, \qquad K_n = U_n S_n,      \\
        L_{n+1} & = L_n + \Delta t F(U_n(L_{n+1})^\top,t_{n+1})^\top U_n, \qquad L_n = V_n S_n,
    \end{align*}
    to obtain $K_{n+1}$ and $L_{n+1}$, respectively.

        {\bf} Orthogonalize the merged spaces by column-pivoted QR factorization to get the prediction spaces
    \begin{align*}
        [\widehat{U},\sim,\sim] & = \qr([U_n,U_{F,n},K_{n+1}]), & [\widehat{V},\sim,\sim] & = \qr([V_n,V_{F,n},L_{n+1}]).
    \end{align*}

    {\bf(S-step)}
    Solve a generalized Sylvester equation to obtain $\widehat{S}_{n+1}$
    \[
        \widehat{S}_{n+1} = \widehat{S}_n + \Delta t \widehat{U}^\top F(\widehat{U}\widehat{S}_{n+1}\widehat{V}^\top,t_{n+1})  \widehat{V}, \qquad
        \widehat{S}_n = (\widehat{U}^\top U_n) S_n (\widehat{V}^\top V_n)^\top.
    \]

    {\bf}
    Compute the SVD matrix $P S_{n+1} Q^\top = \mathcal{T}_{\varepsilon_s}(\widehat{S}_{n+1})$.

    Form $U_{n+1} \leftarrow \widehat{U} P$, $V_{n+1} \leftarrow \widehat{V} Q$.

    Return $X_{n+1} = U_{n+1} S_{n+1} V_{n+1}^\top$.
\end{algorithm}

\subsection{Spectral deferred correction method}\label{sec:sdcm}

The mBUG scheme only has first-order accuracy in time. To improve the temporal accuracy, one approach is to directly embed it into high order temporal scheme, e.g. \cite{nakao2023reduced}.
In this work, we take an alternative approach, the   SDC method, which can achieve high order time accuracy by correcting the numerical solution obtained from a low-order scheme systematically.
This section provides a brief review of the main computational steps of the classical SDC scheme.
The derivation of these formulas can be found in \cite{dutt2000spectral}.
We solve the initial value problem
\begin{equation}
    \left\{
    \begin{aligned}
        \frac{d}{dt} X(t) & = F(X(t),t) , \,\,t \in [0,T], \\
        X(0)              & = X_0,
    \end{aligned}
    \right.\label{sdc-ode}
\end{equation}
where $X(t)$ is the solution, and $F$ is the given right-hand side function.
It is assumed that $F$ is sufficiently smooth so that the discussion of higher-order methods is appropriate.
The corresponding Picard integral equation to \eqref{sdc-ode} is
\begin{equation*}
    X(t) = X_0 + \int_0^t F(X(\tau),\tau)\,d\tau.
\end{equation*}
Suppose now the time interval $[0, T]$ is divided into $M$ non-overlapping intervals by the partition
\[
    0 = t_0 < t_1 < \dots < t_n < \dots < t_M = T.
\]
We will describe an implicit SDC method below, which is used to advance the solution from $t_n$ to $t_{n+1}$.
Let $\Delta t_{n} = t_{n+1} - t_n$ and $X_n$ denotes the numerical approximation of $X(t_n)$.
Divide the time interval $[t_n, t_{n+1}]$ into P subintervals by choosing the points $t_{n,m}$, $m = 0, 1, \dots ,P$,
such that
\[
    t_n = t_{n,0} < t_{n,1} <\dots < t_{n,m} < \dots <t_{n,P} = t_{n+1},
    \quad \Delta t_{n,m} := t_{n,m+1} - t_{n,m}.
\]
The points $\{t_{n,m}\}_{m=0}^P$ can be chosen to be the Legendre Gauss-Lobatto nodes on $[t_n, t_{n+1}]$ to avoid the instability of approximation at equispaced nodes for high order accuracy.

The main steps of the implicit SDC method from $t_n$ to $t_{n+1}$ are highlighted in Algorithm~\ref{alg:sdc}. The base scheme is the implicit Euler method.
We use $X^{(k)}_{n,m}$ to denote the approximation to $X(t_{n,m})$ obtained at the $k$-th level in the SDC method.
$I_{m}^{m+1} F(X^{(k)},t)$ is used to denote the integral  of the $P$-th degree interpolating polynomial on the $P+1$ points $(t_{n,m}, F(X^{(k)}_{n,m},t_{n,m}))_{m=0}^p$ over the subinterval $[t_{n,m},t_{n,m+1}]$,
which can be expressed in the following form,
\begin{equation} \label{eq:integral}
    \begin{aligned}
        I^{m+1}_m F(X^{(k)},t) & =
        \int_{t_{n,m}}^{t_{n,m+1}} \sum_{s=0}^P \frac{\prod_{j=0,j \neq s}^{m}(t-t_{n,j})}{\prod_{j=0,j \neq s}^{m}(t_{n,s}-t_{n,j})} F(X^{(k)}_{n,s},t_{n,s}) \,dt
        \\ & =
        \Delta t_{n,m}\sum_{s=0}^P \left(\frac{1}{ \Delta t_{n,m}}\int_{t_{n,m}}^{t_{n,m+1}} \frac{\prod_{j=0,j \neq s}^{m}(t-t_{n,j})}{\prod_{j=0,j \neq s}^{m}(t_{n,s}-t_{n,j})}\,dt\right)  F(X^{(k)}_{n,s},t_{n,s})
        \\ &=:
        \Delta t_{n,m} \sum_{s=0}^P w_{m,s} F(X^{(k)}_{n,s},t_{n,s})
    \end{aligned}
\end{equation}
where we define $\{w_{m,s}\}$ as the quadrature weights for the integral $[t_{n,m},t_{n,m+1}]$.

\begin{algorithm}[htbp]
    \caption{SDC method from $t_n$ to $t_{n+1}$}\label{alg:sdc}
    \SetKwInput{KwPara}{Parameters}

    \KwIn{initial data at $t_{n}$: $X_n$  }
    \KwOut{numerical solution at $t_{n+1}$: $X_{n+1}$}
    \KwPara{time step $\Delta t$, number of corrections $K$, number of steps $P$}

    \textbf{Stage 1: Initial Approximation} \\
    Set $X^{(1)}_{n,0} = X_n$.\\
    \For{$m = 0, \dots , P-1$}{\
    Use   implicit Euler scheme to compute approximate solution $\{X^{(1)}_{n,m}\}$ at the nodes $\{t_{n,m}\}_{m=0}^P$ recursively, i.e.
    \begin{equation*}
        X^{(1)}_{n,m+1} = X^{(1)}_{n,m} + \Delta t_{n,m} F(X^{(1)}_{n,m+1},t_{n,m+1}),\quad
        m = 0, \dots ,P-1.
    \end{equation*}
    }

    \textbf{Stage 2: Successive Corrections} \\
    \For{$k = 1, \dots , K$}{\
        Set $X^{(k+1)}_{n,0} = X_n$\\
        \For{$m = 0, \dots , P-1$}{\
            Solve the correction equation to obtain $\{X^{(k+1)}_{n,m}\}$,
            \begin{equation}
                \begin{aligned}
                    X^{(k+1)}_{n,m+1} = & X^{(k+1)}_{n,m} + \Delta t_{n,m} \left( F(X^{(k+1)}_{n,m+1},t_{n,m+1}) - F(X^{(k)}_{n,m+1},t_{n,m+1}) \right) \\
                                        & + I^{m+1}_m F(X^{(k)},t).
                \end{aligned}
            \end{equation}}
    }
    Return $X_{n+1}=X^{(K+1)}_{n,P}$.
\end{algorithm}

It can be proven that the SDC scheme has $\min(K+1, P+1)$-th order time accuracy. The local truncation error is analyzed in \cite{dutt2000spectral}.

\begin{theorem}
    Suppose that $X_n = X(t_n)$, the local truncation error of the SDC scheme is
    \[
        \| X_{n+1} - X(t_{n+1}) \| = \mathcal{O}\left( h^{\min{(K+2,P+2)}} \right),
    \]
    where $h = \max_{m}\{\Delta t_{n,m}\}$.
\end{theorem}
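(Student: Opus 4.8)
The plan is to prove the bound by induction on the correction level $k$, establishing that each correction sweep raises the local order of accuracy by exactly one until it saturates at the order dictated by the quadrature $I_m^{m+1}$. I would fix the macro-interval $[t_n,t_{n+1}]$, write $X_m := X(t_{n,m})$ for the exact nodal values, $E^{(k)}_m := X^{(k)}_{n,m} - X_m$ for the nodal errors, and $F_m := F(X_m,t_{n,m})$. Integrating \eqref{sdc-ode} over $[t_{n,m},t_{n,m+1}]$ and writing the exact integral as the interpolatory quadrature plus a remainder gives the exact identity
\begin{equation*}
    X_{m+1} = X_m + \Delta t_{n,m}\sum_{s=0}^{P} w_{m,s} F_s + Q_m,
\end{equation*}
where $Q_m$ is the error of integrating the degree-$P$ interpolant of $F(X(\cdot),\cdot)$ over the subinterval. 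Since the interpolation error on $[t_n,t_{n+1}]$ is $\mathcal{O}(h^{P+1})$ (here smoothness of $F$ and of $X$ is used) and the subinterval has length $\mathcal{O}(h)$, I get $\|Q_m\| = \mathcal{O}(h^{P+2})$ and $\sum_m \|Q_m\| = \mathcal{O}(h^{P+2})$; this is precisely the mechanism that caps the attainable order at $P+2$.

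For the base case $k=1$, I would observe that Stage 1 consists of $P = \mathcal{O}(1)$ steps of implicit Euler with step sizes $\mathcal{O}(h)$ started from the exact value $X^{(1)}_{n,0}=X_n$. Each step has local truncation error $\mathcal{O}(h^2)$, so Lipschitz stability of $F$ accumulated over $\mathcal{O}(1)$ steps yields $\|E^{(1)}_m\| = \mathcal{O}(h^2) = \mathcal{O}(h^{\min(2,P+2)})$ for all $m$. For the inductive step, assume $\|E^{(k)}_m\| = \mathcal{O}(h^{\min(k+1,P+2)})$ for every $m$. Subtracting the exact identity from the correction equation and writing $g^{(j)}_m := F(X^{(j)}_{n,m},t_{n,m}) - F_m$, the decisive observation is that the implicit term $F(X^{(k+1)}_{n,m+1},t_{n,m+1})-F(X^{(k)}_{n,m+1},t_{n,m+1})$ becomes $g^{(k+1)}_{m+1}-g^{(k)}_{m+1}$, so the error recursion reads
\begin{equation*}
    E^{(k+1)}_{m+1} - E^{(k+1)}_{m} = \Delta t_{n,m}\bigl(g^{(k+1)}_{m+1}-g^{(k)}_{m+1}\bigr) + \Delta t_{n,m}\sum_{s=0}^{P} w_{m,s}\,g^{(k)}_s - Q_m.
\end{equation*}

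Because $E^{(k+1)}_0 = 0$, I sum this telescoping relation from $0$ to $m-1$, take norms, and invoke the Lipschitz bound $\|g^{(j)}_s\| \le L\|E^{(j)}_s\|$ together with $\Delta t_{n,m}=\mathcal{O}(h)$, $\sum_m \Delta t_{n,m} = \mathcal{O}(h)$, and $|w_{m,s}|=\mathcal{O}(1)$. The terms carrying the old level $g^{(k)}$ each pick up one extra factor of $h$, contributing $\mathcal{O}(h^{\min(k+2,P+3)})$; the quadrature remainders contribute $\mathcal{O}(h^{P+2})$; and the new-level terms $g^{(k+1)}$ contribute $\mathcal{O}(h)\,\theta_{k+1}$ with $\theta_{k+1} := \max_m\|E^{(k+1)}_m\|$. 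Absorbing the last term into the left-hand side for $h$ small enough gives $\theta_{k+1} = \mathcal{O}(h^{\min(k+2,P+3)}) + \mathcal{O}(h^{P+2}) = \mathcal{O}(h^{\min(k+2,P+2)})$, which closes the induction. Evaluating at $k=K$, $m=P$ then gives $\|X_{n+1}-X(t_{n+1})\| = \|E^{(K+1)}_P\| = \mathcal{O}(h^{\min(K+2,P+2)})$.

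The main obstacle I anticipate is the bookkeeping around the two competing sources of error and the implicit, self-referential term. One must verify that the previous-level error genuinely enters only through a factor of $\Delta t_{n,m}$ (directly or through the weights) — this is the structural heart of deferred correction and is what produces the one-order gain per sweep — while simultaneously controlling the quadrature remainder so that it, and not the gained order, is what ultimately limits the accuracy at $P+2$. Care is also needed to ensure the accumulated constants (from $L$, the quadrature weights, and the number of sweeps) remain bounded uniformly in $h$, and that the absorption of the $\mathcal{O}(h)\theta_{k+1}$ term is legitimate, i.e. $1-Ch$ stays bounded away from zero; the latter is exactly where the stiffness-independent Lipschitz and boundedness assumptions on $F$ are essential.
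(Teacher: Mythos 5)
Your proposal is correct, and it is essentially the argument the paper itself uses: the paper defers this classical statement to the cited reference, but its own proof of Theorem~\ref{prop:sdc-mBUG} runs the same induction on the correction level, with your $Q_m$, $\Delta t_{n,m}\sum_s w_{m,s}g^{(k)}_s$, and implicit difference terms appearing there as the pieces $(A)$, $(B)$, $(C)$, and with the $\mathcal{O}(h)\theta_{k+1}$ absorption carried out via the $(1-hL)$ factors under the smallness condition $hL\le\gamma<1$. The only structural difference is that you telescope the recursion in $m$ while the paper iterates it step by step, which is immaterial.
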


\begin{remark*}
    Since the order of accuracy is   $\min{(K+1, P+1)}$, usually we take $K = P.$
\end{remark*}

\section{Numerical method}\label{sec:numerical-method}

We now present the proposed SDC-mBUG method for \eqref{eqn:mode}.
In Section~\ref{sec:sdc-mBUG}, we describe the algorithm.
In Section~\ref{sec:theoretical-analysis}, we prove the high order accuracy of the SDC-mBUG scheme.

\subsection{The SDC-mBUG method}\label{sec:sdc-mBUG}

We use the same notations as Section~\ref{sec:sdcm}, namely, $X^{(k)}_{n,m}$ is the approximation to $X(t_{n,m})$ at the $k$-th level in the SDC method.
To advance from $t_n$ to $t_{n+1}$, our algorithm consists of two stages: in the first stage, an initial approximation is obtained using the first-order mBUG scheme,
and in the second stage, successive corrections are computed based on the SDC scheme, where the main idea of the mBUG scheme is also applied.
Here, we use $X^{(k)}_{n,m} = U^{(k)}_{n,m} \, S^{(k)}_{n,m} \, (V^{(k)}_{n,m})^\top$ to represent the SVD of $X^{(k)}_{n,m}$. \\

{\bf Compute the initial approximation:} We use the mBUG method to compute the initial approximation: For $m = 0, \dots, P-1$,
\[
    X^{(1)}_{n,m+1} = X^{(1)}_{n,m} + \Delta t_{n,m} F(X^{(1)}_{n,m+1},t_{n,m+1}),
\]
with $X^{(1)}_{n,0} = X_n$. Specifically,
\begin{enumerate}[label=\textbf{Step I-\arabic*}]
    \item Compute $U_{F,n,m}^{(1)}\, S_{F,n,m}^{(1)}\, (V_{F,n,m}^{(1)})^\top := \mathcal{T}^{\text{sum}}_{\varepsilon_f} \circ F(X^{(1)}_{n,m},t_{n,m})$.

    \item K-step and L-step for $k=1$: Solve two generalized Sylvester equations to obtain $K^{(1)}_{n,m+1}$ and $L^{(1)}_{n,m+1}$, respectively,
          \begin{equation}\label{eq:sdc-mBUG-i1}
              \begin{aligned}
                  K^{(1)}_{n,m+1} & = X^{(1)}_{n,m} V^{(1)}_{n,m} + \Delta t_{n,m} F(K^{(1)}_{n,m+1} (V^{(1)}_{n,m})^\top,t_{n,m+1}) V^{(1)}_{n,m},              \\
                  L^{(1)}_{n,m+1} & = (X^{(1)}_{n,m})^\top U^{(1)}_{n,m} + \Delta t_{n,m} F(U^{(1)}_{n,m} (L^{(1)}_{n,m+1})^\top,t_{n,m+1})^\top U^{(1)}_{n,m}.
              \end{aligned}
          \end{equation}

    \item Update spaces for S-step:
          \begin{align*}
              [\widehat{U},\sim,\sim] & := \qr([U^{(1)}_{n,m},U_{F,n,m}^{(1)},K^{(1)}_{n,m+1}]), &
              [\widehat{V},\sim,\sim] & := \qr([V^{(1)}_{n,m},V_{F,n,m}^{(1)},L^{(1)}_{n,m+1}]).
          \end{align*}

    \item S-step for $k=1$: Solve the following equation to obtain $\widehat{S}^{(1)}_{n,m+1}$,
          \begin{equation}\label{eq:sdc-mBUG-i2}
              \begin{aligned}
                  \widehat{S}^{(1)}_{n,m+1} ={}
                   & \widehat{U}^\top X^{(1)}_{n,m}\widehat{V}
                  + \Delta t_{n,m} \widehat{U}^\top F(\widehat{U}\widehat{S}^{(1)}_{n,m+1}\widehat{V}^\top,t_{n,m+1}) \widehat{V}.
              \end{aligned}
          \end{equation}

    \item Compute $P S^{(1)}_{n,m+1} Q^\top := \mathcal{T}_{\varepsilon_{s}}(\widehat{S}^{(1)}_{n,m+1})$.

    \item Form $U^{(1)}_{n,m+1} = \widehat{U} P$, and
          $V^{(1)}_{n,m+1} = \widehat{V} Q$.
\end{enumerate}

{\bf Compute successive corrections:} 
For $k=1, \dots, K$, we would like to obtain $X^{(k+1)}_{n,m+1}$ recursively for $m = 0, \dots, P-1,$ which are low rank approximations to $\widetilde{X}^{(k+1)}_{n,m+1},$ that satisfies
\begin{align}\label{eq:csr}
    \widetilde{X}^{(k+1)}_{n,m+1} ={} & X^{(k+1)}_{n,m} +
    \Delta t_{n,m} \big\{F(\widetilde{X}^{(k+1)}_{n,m+1},t_{n,m+1}) - F(X^{(k)}_{n,m+1},t_{n,m+1})\big\} + I^{m+1}_m F(X^{(k)},t),
\end{align}
with initial value $X^{(k+1)}_{n,0} = X_n$.
Here, $I_{m}^{m+1} F(X^{(k)},t)$ denotes the integral  of the $P$-th degree interpolating polynomial on the $P+1$ points $(t_{n,m}, F(X^{(k)}_{n,m},t_{n,m}))_{m=0}^p$ over the subinterval $[t_{n,m},t_{n,m+1}]$.

In practice, \eqref{eq:csr} is computed by the mBUG method without the $K$ and $L$-steps. 
Namely, for $k = 1, \dots, K$: set $X^{(k+1)}_{n,0} = X_n$; then,
\begin{enumerate}[label=\textbf{Step C-\arabic*}]
    \item \label{step-c1} Compute $U_{F,n,m}^{(k)} S_{F,n,m}^{(k)} (V_{F,n,m}^{(k)})^\top := \mathcal{T}^{\text{sum}}_{\varepsilon^{(k)}_{f}} \circ F(X^{(k)}_{n,m},t_{n,m})$ for $m = 0, \dots, P$.

    \item \label{step-c2} Compute the truncated sum in level $k$ for $m = 0, \dots, P-1$
          \begin{align*}
                   & U_{R,n,m}^{(k)} S_{R,n,m}^{(k)} (V_{R,n,m}^{(k)})^\top                                                                              \\
              :={} & \mathcal{T}^{\text{sum}}_{\varepsilon^{(k)}_{r}}\left(- \Delta t_{n,m} U_{F,n,m+1}^{(k)} S_{F,n,m+1}^{(k)} (V_{F,n,m+1}^{(k)})^\top
              + \Delta t_{n,m} \sum_{s=0}^P w_{m,s} U_{F,n,s}^{(k)} S_{F,n,s}^{(k)} (V_{F,n,s}^{(k)})^\top \right).
          \end{align*}

    \item \label{step-c3} For $m=0, \ldots, P-1$:
          \begin{itemize}
              \item Update spaces for S-step:
              \begin{equation}\label{eq:S-step}
                \begin{aligned}
                    [\widehat{U},\sim,\sim] & := \qr([U^{(k+1)}_{n,m}, U^{(k)}_{F,n,m+1}, U^{(k)}_{R,n,m}]),  \\
                    [\widehat{V},\sim,\sim] & := \qr([V^{(k+1)}_{n,m}, V^{(k)}_{F,n,m+1}, V^{(k)}_{R,n,m}]).
                \end{aligned}
            \end{equation}
            The subspace choices are largely motivated by the mBUG method and the proof of Theorem \ref{prop:sdc-mBUG}.

              \item \label{step-c4} Solve the following equation to obtain $\widehat{S}^{(k+1)}_{n,m+1}$,
                    \begin{equation}\label{eq:sdc-mBUG-m2}
                        \begin{aligned}
                            \widehat{S}^{(k+1)}_{n,m+1} ={} & \widehat{U}^\top X^{(k+1)}_{n,m} \widehat{V}
                            + \Delta t_{n,m} \widehat{U}^\top F(\widehat{U}\widehat{S}^{(k+1)}_{n,m+1}\widehat{V}^\top,t_{n,m+1}) \widehat{V}        \\
                            {}                              & + \widehat{U}^\top U_{R,n,m}^{(k)} S_{R,n,m}^{(k)} (V_{R,n,m}^{(k)})^\top \widehat{V}.
                        \end{aligned}
                    \end{equation}

              \item \label{step-c5} Compute $P S^{(k+1)}_{n,m+1} Q^\top := \mathcal{T}_{\varepsilon^{(k+1)}_{s}}(\widehat{S}^{(k+1)}_{n,m+1})$.

              \item Form $U^{(k+1)}_{n,m+1} = \widehat{U} P$, $V^{(k+1)}_{n,m+1} = \widehat{V} Q$.
          \end{itemize}
\end{enumerate}

Finally, we take $X_{n+1} = X^{(K+1)}_{n,P} = U^{(K+1)}_{n,P} S^{(K+1)}_{n,P} (V^{(K+1)}_{n,P})^\top$ as the low-rank numerical solution at time $t_{n+1}$.

\vspace{0.3cm}

In \eqref{eq:S-step}, the selection of $\widehat{U}$ and $\widehat{V}$ follows a similar design principle to the mBUG method, i.e., using an explicit formulation for prediction.
Compared to the BUG and mBUG methods, we omit the $K$- and $L$-steps in the second stage, which are costly linear solves.
This is inspired by the insight in \cite{huang2006accelerating} to view SDC as a preconditioned Neumann series expansion for collocation schemes.

Our algorithm involves many low-rank truncation operations.
The error tolerance of low-rank truncation operators will affect the accuracy of the method.
The specific strategy for selecting these truncation thresholds is as follows: set $P=K$,
\[
    \begin{aligned}
         & \varepsilon_{s} = \mathcal{O}(h^2), \quad \varepsilon_{f} = \mathcal{O}(h), \quad
        \varepsilon^{(k+1)}_{s} = \varepsilon^{(k)}_{r} = \mathcal{O}(h^{k+2}), \quad
        \varepsilon^{(k)}_{f} = \mathcal{O}(h^{k+1}), \quad k=1,\dots,K.
    \end{aligned}
\]
As we will show Section \ref{sec:theoretical-analysis}, this strategy is sufficient to achieve $(K+1)$-th order accuracy. Moreover, as we can see, in the early stages (smaller $k$) we are allowed to take larger truncation tolerances, which can effectively control the rank and numerical cost. With given tolerance parameters, in our numerical experiments, we test both the hard truncation and the soft truncation.
The results of these experiments demonstrate that while both truncation achieve optimial accuracy, they differ in   performance in rank control.

\subsection{Theoretical analysis}\label{sec:theoretical-analysis}

We make standard assumptions as in the error analysis of BUG and mBUG schemes, i.e.   the operator $F$ is Lipschitz continuous in both variables and bounded, i.e., there exist constants $L, B >0$ such that
\begin{equation}\label{eq:lipschitz}
    \| F(Y,s) - F(Z,t) \| \le L \| Y - Z\| + L \vert s-z \vert,\quad
    \| F(X,t) \| \le B.
\end{equation}
Below, first we review the local truncation error of the first order mBUG scheme in \cite{appelo2024robust}. Then, we present the local truncation error of our proposed SDC-mBUG scheme.

\begin{lemma}[Local truncation error of first order mBUG scheme \cite{appelo2024robust}] \label{lemma:first-order-mBUG}
    Suppose $F$ satisfies conditions \eqref{eq:lipschitz}.
    If we denote $X(t_{n+1})$ as the exact solution
    and $X_{n+1}$ the numerical solution obtained from the first order mBUG scheme in Algorithm \ref{alg:rank-adaptive-mBUG} from $t_n$ to $t_{n+1} = t_n + h$ with $X_n=X(t_n)$, then for sufficiently small $h$ (i.e. $h L \le \gamma < 1$ for some $\gamma > 0$), the local truncation error is bounded by
    \begin{equation*}
        \| X_{n+1}-X(t_{n+1})\| \le C (h^2 + h \varepsilon_f + \varepsilon_s),
    \end{equation*}
    where $C$ is a constant that only depends on $L$, $B$ and $\gamma$.
\end{lemma}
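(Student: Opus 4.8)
The plan is to compare the mBUG output $X_{n+1}$ with the exact solution $X(t_{n+1})$ by inserting the pre-truncation S-step iterate as an intermediate quantity and then recognizing the S-step as a \emph{projected} implicit Euler step, whose deviation from the genuine implicit Euler step is governed entirely by the modeling error of the prediction spaces. Writing the orthonormal prediction bases $\widehat U,\widehat V$ from Algorithm~\ref{alg:rank-adaptive-mBUG}, setting $P_U=\widehat U\widehat U^\top$, $P_V=\widehat V\widehat V^\top$ and $\widehat X_{n+1}=\widehat U\widehat S_{n+1}\widehat V^\top$, the S-step is equivalent to
\[
    \widehat X_{n+1}=P_U X_n P_V + h\,P_U F(\widehat X_{n+1},t_{n+1})P_V .
\]
Because $\widehat U,\widehat V$ have orthonormal columns, the final truncation contributes only $\|X_{n+1}-\widehat X_{n+1}\|\le\varepsilon_s$ by the defining property of $\mathcal T_{\varepsilon_s}$, so it suffices to bound $\|\widehat X_{n+1}-X(t_{n+1})\|$.

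First I would record two preliminary facts. Since $U_n\subseteq\operatorname{range}(\widehat U)$ and $V_n\subseteq\operatorname{range}(\widehat V)$ by construction (the $K_{n+1},L_{n+1}$ columns only enlarge the spaces and help), we get $P_U X_n P_V=X_n$, so the displayed equation becomes $\widehat X_{n+1}=X_n+hP_U F(\widehat X_{n+1},t_{n+1})P_V$; taking norms and using the boundedness in \eqref{eq:lipschitz} gives the a priori estimate $\|\widehat X_{n+1}-X_n\|\le hB$. For $hL\le\gamma<1$ the map $S\mapsto\widehat U^\top X_n\widehat V+h\widehat U^\top F(\widehat U S\widehat V^\top,t_{n+1})\widehat V$ is a contraction, so $\widehat S_{n+1}$ exists and is unique. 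Second, with only Lipschitz continuity and boundedness the implicit Euler consistency error is $O(h^2)$: writing $X(t_{n+1})-X(t_n)=\int_{t_n}^{t_{n+1}}F(X(\tau),\tau)\,d\tau$ and subtracting $hF(X(t_{n+1}),t_{n+1})$, the integrand difference is bounded by $L(\|X(\tau)-X(t_{n+1})\|+|\tau-t_{n+1}|)\le L(B+1)h$, hence $\|\tau_n\|:=\|X(t_{n+1})-X(t_n)-hF(X(t_{n+1}),t_{n+1})\|\le L(B+1)h^2$.

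The heart of the argument is the modeling error bound. Splitting $F-P_U F P_V=(I-P_U)F+P_U F(I-P_V)$ (arguments $(\widehat X_{n+1},t_{n+1})$ suppressed), I would estimate $\|(I-P_U)F(\widehat X_{n+1},t_{n+1})\|$ as follows. The column space $U_{F,n}$ of the $\varepsilon_f$-truncated right-hand side $U_{F,n}S_{F,n}V_{F,n}^\top=\mathcal T^{\text{sum}}_{\varepsilon_f}\circ F(X_n,t_n)$ lies in $\operatorname{range}(\widehat U)$, so $(I-P_U)$ annihilates it and
\[
    \|(I-P_U)F(X_n,t_n)\|=\bigl\|(I-P_U)\bigl(F(X_n,t_n)-U_{F,n}S_{F,n}V_{F,n}^\top\bigr)\bigr\|\le\varepsilon_f .
\]
Transferring from $(X_n,t_n)$ to $(\widehat X_{n+1},t_{n+1})$ via $\|I-P_U\|\le1$, Lipschitz continuity, the a priori bound $\|\widehat X_{n+1}-X_n\|\le hB$, and $|t_{n+1}-t_n|=h$ gives $\|(I-P_U)F(\widehat X_{n+1},t_{n+1})\|\le\varepsilon_f+L(B+1)h$; the symmetric estimate with $V_{F,n}$ controls $\|F(\widehat X_{n+1},t_{n+1})(I-P_V)\|$, so the total modeling error is $O(\varepsilon_f+h)$. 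Finally, setting $e=\widehat X_{n+1}-X(t_{n+1})$, subtracting the consistency identity from the projected implicit Euler equation, and using $P_U F(\widehat X_{n+1})P_V-F(X(t_{n+1}))=[P_U F(\widehat X_{n+1})P_V-F(\widehat X_{n+1})]+[F(\widehat X_{n+1})-F(X(t_{n+1}))]$ together with Lipschitz continuity yields $\|e\|\le h\,(\text{modeling error})+hL\|e\|+\|\tau_n\|$. Absorbing $hL\|e\|$ through $hL\le\gamma<1$ gives $\|e\|\le\frac{1}{1-\gamma}\bigl(h\cdot O(\varepsilon_f+h)+O(h^2)\bigr)=O(h\varepsilon_f+h^2)$, and adding the $\varepsilon_s$ truncation contribution produces the claimed bound with $C=C(L,B,\gamma)$.

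I expect the main obstacle to be this modeling error step: justifying that a prediction space built from the $\varepsilon_f$-truncated right-hand side evaluated at the \emph{old} point $(X_n,t_n)$ still captures $F$ at the \emph{new} implicit point $(\widehat X_{n+1},t_{n+1})$, and tracking the estimate carefully enough that $\varepsilon_f$ enters only through the $h\varepsilon_f$ term rather than as a bare $\varepsilon_f$. The boundedness assumption is essential here, since it supplies the a priori bound $\|\widehat X_{n+1}-X_n\|\le hB$ that keeps the transfer error $O(h)$ without requiring any smoothness of $F$.
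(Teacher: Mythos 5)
The paper does not actually prove this lemma --- it imports the bound directly from \cite{appelo2024robust} --- so there is no in-paper proof to compare against; judged on its own, your argument is correct and complete. Your decomposition (pre-truncation iterate $\widehat X_{n+1}$ as a projected implicit Euler step, the modeling error $(I-P_U)F$ controlled by $\varepsilon_f$ plus a Lipschitz transfer term of size $L(B+1)h$ via the a priori bound $\|\widehat X_{n+1}-X_n\|\le hB$, and absorption of $hL\|e\|$ using $hL\le\gamma<1$) is the standard mBUG local-error analysis, and it is exactly the same style of estimate the paper itself deploys in the proof of Theorem~\ref{prop:sdc-mBUG} (the projection identities $\widehat U\widehat U^\top X^{(k+1)}_{n,m}\widehat V\widehat V^\top=X^{(k+1)}_{n,m}$, the $\|\widehat U\widehat U^\top Z\widehat V\widehat V^\top\|\le\|Z\|$ bound, and the $(1-hL)$ absorption), so every constant you produce depends only on $L$, $B$, $\gamma$ as claimed.
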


\begin{theorem}[Local truncation error of SDC-mBUG scheme] \label{prop:sdc-mBUG}
    Under the assumptions \eqref{eq:lipschitz},
    the local truncation error obtained with the SDC-mBUG scheme satisfies
    \begin{align} \label{eq:truncation_error}
        \| X_{n+1} - X(t_{n+1}) \| \le  C_{K+1} \left( \delta^{(K+1)} +h^{\min{(K+2,P+2)}} \right),
    \end{align}
    where the time step $h = t_{n+1} - t_n$ is small enough (i.e. $h L \le \gamma < 1$ for some $\gamma > 0$),
    $C_{K+1}$ is a constant that depends on $L,B,P,k,\gamma$,
    and $\delta^{(k+1)}$ 
    satisfies
    \begin{align*}
        \delta^{(1)}   & = \mathcal{O}(\varepsilon_s + \varepsilon_f h),                                                                                                                                                                \\
        \delta^{(k+1)} & = \mathcal{O} \left(h \delta^{(k)} + \left(\varepsilon^{(k+1)}_{s} + h \varepsilon^{(k)}_{f} + \varepsilon^{(k)}_{r}\right)\right)                                                                             \\
                       & = \mathcal{O}\left(h^{k} \varepsilon_s + h^{k+1} \varepsilon_f + \sum_{i=1}^{k}  h^{k-i}\left(  \varepsilon^{(i+1)}_{s} + h \varepsilon^{(i)}_{f} + \varepsilon^{(i)}_{r} \right)\right), \quad k = 1,\dots,K.
    \end{align*}
\end{theorem}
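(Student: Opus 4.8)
The plan is to decompose the total local error by the triangle inequality into a \emph{consistency} part and a \emph{truncation-induced} part. To this end I would introduce the auxiliary iterates $\widetilde{X}^{(k)}_{n,m}$ obtained by running the same SDC sweeps but with \emph{exact} solves and \emph{no} low-rank truncation: $\widetilde{X}^{(1)}_{n,m}$ from exact implicit Euler at the nodes, and $\widetilde{X}^{(k+1)}_{n,m+1}$ from solving the correction equation \eqref{eq:csr} exactly. Writing
\[
\| X^{(K+1)}_{n,P} - X(t_{n+1}) \| \le \| X^{(K+1)}_{n,P} - \widetilde{X}^{(K+1)}_{n,P} \| + \| \widetilde{X}^{(K+1)}_{n,P} - X(t_{n+1}) \|,
\]
the second term is exactly the classical SDC local truncation error, which is $\mathcal{O}(h^{\min(K+2,P+2)})$ by the result stated above (cf.~\cite{dutt2000spectral}). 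It then remains to bound the first term, the accumulated low-rank truncation error, by $C_{K+1}\,\delta^{(K+1)}$; this I would do by a double induction on the correction level $k$ and the sub-node index $m$, with $E^{(k)}_{n,m} := \| X^{(k)}_{n,m} - \widetilde{X}^{(k)}_{n,m} \|$.

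For the base level $k=1$, each sub-step of the initial approximation is precisely one step of the first-order mBUG scheme, so Lemma~\ref{lemma:first-order-mBUG} bounds the single-step discrepancy between $X^{(1)}_{n,m+1}$ and the exact implicit-Euler value by $C(\varepsilon_s + h\varepsilon_f)$, plus a Lipschitz-propagated copy of $E^{(1)}_{n,m}$. Since $hL \le \gamma < 1$ guarantees the implicit step is solvable with a uniformly bounded inverse factor $1/(1-\gamma)$, summing over the $P$ sub-steps (with $P$ bounded, so the constant is harmless) yields $E^{(1)}_{n,m} = \mathcal{O}(\varepsilon_s + h\varepsilon_f) = \mathcal{O}(\delta^{(1)})$.

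For the inductive step $k \to k+1$, the heart of the argument is to show that the mBUG-style S-step \eqref{eq:sdc-mBUG-m2}, \emph{despite omitting the K- and L-steps}, is an accurate discretization of \eqref{eq:csr}. The subspace choice \eqref{eq:S-step} is engineered for this: $\widehat{U},\widehat{V}$ contain the column/row spaces of $X^{(k+1)}_{n,m}$ and of the rounded remainder $U^{(k)}_{R,n,m}S^{(k)}_{R,n,m}(V^{(k)}_{R,n,m})^\top$, so the projections $\widehat{U}\widehat{U}^\top(\cdot)\widehat{V}\widehat{V}^\top$ reproduce these two contributions \emph{exactly}, while $U^{(k)}_{F,n,m+1}$ supplies a prediction of the range of the implicit term $F(\widetilde{X}^{(k+1)}_{n,m+1})$. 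Accounting for the three truncations with their correct $h$-weights---the remainder rounding in Step~C-2 carries the factor $\Delta t_{n,m}$ multiplying $\varepsilon^{(k)}_f$ and adds $\varepsilon^{(k)}_r$, and the final S-step rounding adds $\varepsilon^{(k+1)}_s$---together with the Lipschitz dependence of the integral and $F$ terms on the level-$k$ data (each carrying a $\Delta t$ factor), I obtain the per-node recursion
\[
E^{(k+1)}_{n,m+1} \le (1+Ch)\,E^{(k+1)}_{n,m} + Ch\,E^{(k)}_{n,m+1} + C\big(\varepsilon^{(k+1)}_{s} + h\,\varepsilon^{(k)}_{f} + \varepsilon^{(k)}_{r}\big).
\]
Solving this over the bounded number of sub-nodes $m=0,\dots,P-1$ and then unrolling over the levels reproduces exactly the recursion $\delta^{(k+1)} = \mathcal{O}(h\delta^{(k)} + \varepsilon^{(k+1)}_s + h\varepsilon^{(k)}_f + \varepsilon^{(k)}_r)$, and hence the stated closed form for $\delta^{(K+1)}$.

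I expect the main obstacle to be the rigorous control of the Galerkin projection error in the inductive step, namely quantifying how faithfully the predicted basis $U^{(k)}_{F,n,m+1}$ (and its row counterpart) captures the range of the genuinely implicit term $F(\widetilde{X}^{(k+1)}_{n,m+1})$ once the K- and L-steps are dropped. This is the step where the mBUG analysis relied on the explicit K/L predictions, so I must instead exploit that successive SDC iterates are close, together with $\|F(\widetilde{X}^{(k+1)}_{n,m+1})-F(X^{(k)}_{n,m+1})\|\le L\|\widetilde{X}^{(k+1)}_{n,m+1}-X^{(k)}_{n,m+1}\|$ and the fact that the range of $F(X^{(k)}_{n,m+1})$ lies in $U^{(k)}_{F,n,m+1}$ up to $\varepsilon^{(k)}_f$, to show that the untracked component $(\mathcal{I}-\widehat{U}\widehat{U}^\top)F(\widetilde{X}^{(k+1)}_{n,m+1})\widehat{V}\widehat{V}^\top$ is of the same order as the truncation terms already present. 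A secondary technical point is ensuring the implicit S-step solve is well-posed with an inverse factor bounded uniformly in $k$ and $m$ by $1/(1-\gamma)$, which follows from $hL\le\gamma<1$.
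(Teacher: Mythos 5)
Your proposal is correct in substance, and its technical core --- the per-node recursion in $(k,m)$, the observation that the merged basis \eqref{eq:S-step} reproduces $X^{(k+1)}_{n,m}$ and $R^{(k)}_{n,m}$ exactly while the implicit term $F(\widehat{X}^{(k+1)}_{n,m+1},t_{n,m+1})$ must be handled by adding and subtracting $F(X^{(k)}_{n,m+1},t_{n,m+1})$ and using the Lipschitz bound together with the non-expansiveness of $\widehat{U}\widehat{U}^\top(\cdot)\widehat{V}\widehat{V}^\top$ --- is exactly the mechanism the paper uses to establish its key claim \eqref{eq:claim}. Where you genuinely differ is the outer decomposition: you compare the numerical iterates against the \emph{untruncated, exactly solved} SDC trajectory $\widetilde{X}^{(k)}_{n,m}$ and invoke the classical SDC local-truncation theorem for the consistency part, whereas the paper compares $X^{(k)}_{n,m}$ directly against the exact solution $X(t_{n,m})$ at every level, defining $\widetilde{X}^{(k+1)}_{n,m+1}$ only as the \emph{one-step} exact correction built from the numerical data at node $m$ and level $k$, and re-deriving the quadrature error inline via the splitting into the interpolation error $(A)$, the level-$k$ perturbation $(B)$, and the implicit term $(C)$. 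Your route is more modular and cleanly separates consistency from truncation-error propagation, at the price of needing the classical SDC theorem (and well-posedness of the exact iterates) as a black box; the paper's route is self-contained and produces the constants $C_k$ and the recursion for $\delta^{(k)}$ in one induction. Two minor points to tighten: Lemma~\ref{lemma:first-order-mBUG} bounds the mBUG step against the exact solution, not against the exact implicit-Euler value, so your base case either needs the (harmless, since it is absorbed into $h^{\min(K+2,P+2)}$) extra $\mathcal{O}(h^2)$ from implicit-Euler consistency or a restatement of the lemma; and in your recursion the level-$k$ dependence enters through the quadrature $I^{m+1}_m F(X^{(k)},t)$, so the term $Ch\,E^{(k)}_{n,m+1}$ should be $Ch\,\max_{0\le s\le P}E^{(k)}_{n,s}$, which does not change the conclusion.
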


\begin{proof}
    We will use   induction   to prove that
    \begin{align}\label{eq:truncation}
        \| X(t_{n,m}) - X^{(k)}_{n,m} \| \le  C_{k} \left( \delta^{(k)} + h^{\min(k+1,P+2)} \right), \quad m=0,\dots,P, \,\, k=1, \ldots, K+1.
    \end{align}
    It is obvious that $k=1$ meets this requirement: the local truncation error of the first-order mBUG scheme is
    $C (h^2 + h \varepsilon_{f} + \varepsilon_{s}) = C_1 (\delta^{(1)} +  h^2)$ according to Lemma~\ref{lemma:first-order-mBUG}.
    Assume that \eqref{eq:truncation} is true for level $k$, for all $m=0,\dots, P$,
    in the following, we will prove that \eqref{eq:truncation} also holds for $k+1$.
    In particular, we will estimate $\| X(t_{n,m+1}) -\widetilde{X}^{(k+1)}_{n,m+1} \|$ first, and then estimate $\| X^{(k+1)}_{n,m+1} - \widetilde{X}^{(k+1)}_{n,m+1} \|$.
    Here, $\widetilde{X}^{(k+1)}_{n,m+1}$ is  defined in \eqref{eq:csr}.

    Taking the difference between \eqref{eq:csr} and
    \begin{align*}
        X(t_{n,m+1}) = X(t_{n,m}) + \int_{t_{n,m}}^{t_{n,m+1}} F(X(t),t) \,dt,
    \end{align*}
    we have that
    \begin{align*}
        X(t_{n,m+1}) - \widetilde{X}^{(k+1)}_{n,m+1}
        ={} & X(t_{n,m}) - X^{(k+1)}_{n,m}
        \\ &+ \underbrace{\int_{t_{n,m}}^{t_{n,m+1}} F(X(t),t) \,dt - I^{m+1}_m F(X(t),t)}_{(A)}
        \\ &+ \underbrace{I^{m+1}_m F(X(t),t) - I^{m+1}_m F(X^{(k)},t)}_{(B)}
        \\ & \underbrace{-\Delta t_{n,m} \big\{ F(\widetilde{X}^{(k+1)}_{n,m+1} ,t_{n,m+1}) - F(X^{(k)}_{n,m+1},t_{n,m+1})\big\}}_{(C)}.
    \end{align*}

    \noindent
    Since $I^{m+1}_m F(X(t),t)$ is the integral of the $P$-th degree interpolating polynomial on the $P+1$ points
    $\{(t_{n,m}, F(X(t_{n,m})))\}_{m=0}^P$ over the subinterval $[t_{n,m}, t_{n,m+1}]$, which is accurate to the order $\mathcal{O}(h^{P+2})$,
    we have
    \begin{align*}
        \| (A) \| ={} & \| \int_{t_{n,m}}^{t_{n,m+1}} F(X(t),t) \,dt - I^{m+1}_m F(X(t),t) \| \le C_A h^{P+2},
    \end{align*}
    and
    \begin{align*}
        \| (B) \| = \| I^{m+1}_m F(X(t),t) - I^{m+1}_m F(X^{(k)},t) \| \le C_B C_k h  (\delta^{(k)} + h^{\min(k+1,P+2)}),
    \end{align*}
    where $C_A$ and $C_B$ are independent of $h,$ but may depend on $L, B$. On the other hand,
    \begin{align*}
        \|(C)\| ={} & \| - \Delta t_{n,m} \big\{F(\widetilde{X}^{(k+1)}_{n,m+1},t_{n,m+1})  - F(X^{(k)}_{n,m+1},t_{n,m+1})\big\} \|
        \\
        \leq {}     & \Delta t_{n,m} L \| \widetilde{X}^{(k+1)}_{n,m+1}  - X^{(k)}_{n,m+1} \|
        \\
        \leq {}     & h L \| X(t_{n,m+1}) - \widetilde{X}^{(k+1)}_{n,m+1} \|
        + h L \| X(t_{n,m+1}) - X^{(k)}_{n,m+1} \|                                                                                  \\
        \leq {}     & h L \| X(t_{n,m+1}) - \widetilde{X}^{(k+1)}_{n,m+1} \|
        + h L C_k \left( \delta^{(k)} +  h^{\min(k+1,P+2)} \right).
    \end{align*}
    Then combining these three parts, we have that
    \begin{equation}\label{eq:star}
        \begin{aligned}
            {}    & (1-h L) \| X(t_{n,m+1}) - \widetilde{X}^{(k+1)}_{n,m+1} \| \\
            \le{} &
            \| X(t_{n,m}) - X^{(k+1)}_{n,m}\|
            + C_A h^{P+2} + h(C_B+ L) C_k(\delta^{(k)} + h^{\min(k+1,P+2)}).
        \end{aligned}
    \end{equation}

    In the following, we will show that the following inequality holds
    \begin{align}\label{eq:claim}
        (1 - h L) \| \widetilde{X}^{(k+1)}_{n,m+1} - X^{(k+1)}_{n,m+1} \|
        \le{} &
        2 h L \|  \widetilde{X}^{(k+1)}_{n,m+1} - X^{(k)}_{n,m+1}\|
        + (1-h L) \varepsilon^{(k+1)}_{s} + h \varepsilon^{(k)}_{f} + \varepsilon^{(k)}_{r}.
    \end{align}
    Define $\widehat{X}^{(k+1)}_{n,m+1} = \widehat{U} \widehat{S}^{(k+1)}_{n,m+1} \widehat{V}^\top$. Based on the definition of $\widehat{S}^{(k+1)}_{n,m+1}$ in the S-step \eqref{eq:sdc-mBUG-m2}, we derive that
    \begin{equation*}
        \begin{aligned}
            \widehat{X}^{(k+1)}_{n,m+1}
            ={} & \widehat{U} \widehat{S}^{(k+1)}_{n,m+1} \widehat{V}^\top                  \\
            ={} & \widehat{U} \widehat{U}^\top X^{(k+1)}_{n,m} \widehat{V} \widehat{V}^\top
            + \Delta t_{n,m} \widehat{U} \widehat{U}^\top F(\widehat{X}^{(k+1)}_{n,m+1},t_{n,m+1}) \widehat{V} \widehat{V}^\top
            + \widehat{U} \widehat{U}^\top R^{(k)}_{n,m} \widehat{V} \widehat{V}^\top
        \end{aligned}
    \end{equation*}
    Here, $R_{n,m}^{(k)} = U_{R,n,m}^{(k)} S_{R,n,m}^{(k)} (V_{R,n,m}^{(k)})^\top$.
    Since the column space of $X^{(k+1)}_{n,m}$ is a subset of the column space of $\widehat{U}$,
    we have $\widehat{U} \widehat{U}^\top X^{(k+1)}_{n,m} = X^{(k+1)}_{n,m}$.
    Similarly, $ X^{(k+1)}_{n,m} \widehat{V} \widehat{V}^\top = X^{(k+1)}_{n,m}$.
    Therefore, $\widehat{U} \widehat{U}^\top X^{(k+1)}_{n,m} \widehat{V} \widehat{V}^\top = X^{(k+1)}_{n,m}$.
    By a similar argument, we also have
    \begin{align*}
        \widehat{U} \widehat{U}^\top F({X}^{(k+1)}_{n,m+1},t_{n,m+1}) \widehat{V} \widehat{V}^\top = F({X}^{(k+1)}_{n,m+1},t_{n,m+1}), \qquad
        \widehat{U} \widehat{U}^\top R^{(k)}_{n,m} \widehat{V} \widehat{V}^\top = R^{(k)}_{n,m}.
    \end{align*}
    This gives
    \begin{equation*}
        \begin{aligned}
            \widehat{X}^{(k+1)}_{n,m+1} ={} & X^{(k+1)}_{n,m}
            + \Delta t_{n,m} F(X^{(k)}_{n,m+1},t_{n,m+1})  + R^{(k)}_{n,m}                                                                                                                          \\
                                            & + \Delta t_{n,m} \widehat{U} \widehat{U}^\top (F(\widehat{X}^{(k+1)}_{n,m+1},t_{n,m+1}) - F(X^{(k)}_{n,m+1},t_{n,m+1})) \widehat{V} \widehat{V}^\top.
        \end{aligned}
    \end{equation*}
    From Step C-1 and Step C-2, we know that the truncation errors are introduced
    \begin{align*}
        \|\Delta t_{n,m} \big\{F(\widetilde{X}^{(k+1)}_{n,m+1},t_{n,m+1}) - F(X^{(k)}_{n,m+1},t_{n,m+1})\big\} + I^{m+1}_m F(X^{(k)},t) - R^{(k)}_{n,m}\| \le h \varepsilon^{(k)}_{f} + \varepsilon^{(k)}_{r}
    \end{align*}
    Therefore,
    \begin{align*}
        \|\widetilde{X}^{(k+1)}_{n,m+1} - \widehat{X}^{(k+1)}_{n,m+1}\|
        \leq {} & \| \Delta t_{n,m}  \left( F(\widetilde{X}^{(k+1)}_{n,m+1},t_{n,m+1})
        - F(X^{(k)}_{n,m+1},t_{n,m+1})
        \right)\|
        \\ & + \| \Delta t_{n,m} \widehat{U} \widehat{U}^\top (F(\widehat{X}^{(k+1)}_{n,m+1},t_{n,m+1}) - F(X^{(k)}_{n,m+1},t_{n,m+1})) \widehat{V} \widehat{V}^\top \|\\
                & + h \varepsilon^{(k)}_{f} + \varepsilon^{(k)}_{r}.
    \end{align*}
    In fact, for any matrix $Z$,
    \begin{align*}
        \| \widehat{U} \widehat{U}^\top Z \widehat{V} \widehat{V}^\top \|
        \le \| \widehat{U} \widehat{U}^\top \|_2 \,\,\| Z \| \,\, \| \widehat{V} \widehat{V}^\top \|_2
        \le \| Z \|,
    \end{align*}
    where $\|\cdot \|_2$ is the matrix 2-norm.
    Thus
    \begin{align*}
        \| \widetilde{X}^{(k+1)}_{n,m+1} - \widehat{X}^{(k+1)}_{n,m+1}\|
        \le{} & h  \|  F(\widetilde{X}^{(k+1)}_{n,m+1},t_{n,m+1})
        - F(X^{(k)}_{n,m+1},t_{n,m+1})
        \|                                                                                                                                          \\
              & + h \| F(\widehat{X}^{(k+1)}_{n,m+1},t_{n,m+1}) - F(X^{(k)}_{n,m+1},t_{n,m+1}) \| + h \varepsilon^{(k)}_{f} + \varepsilon^{(k)}_{r}
        \\
        \le{} & h L  \|  \widetilde{X}^{(k+1)}_{n,m+1}
        - X^{(k)}_{n,m+1}
        \|  + h L \| \widehat{X}^{(k+1)}_{n,m+1} - X^{(k)}_{n,m+1} \|+ h \varepsilon^{(k)}_{f} + \varepsilon^{(k)}_{r}
        \\
        \le{} & 2 h L  \|  \widetilde{X}^{(k+1)}_{n,m+1} - X^{(k)}_{n,m+1}\|
        + h L \| \widetilde{X}^{(k+1)}_{n,m+1} - \widehat{X}^{(k+1)}_{n,m+1} \|+ h \varepsilon^{(k)}_{f} + \varepsilon^{(k)}_{r}.
    \end{align*}
    This tells
    \begin{align*}
        (1- h L) \| \widetilde{X}^{(k+1)}_{n,m+1} - \widehat{X}^{(k+1)}_{n,m+1} \|
        \le 2 h L
        \|  \widetilde{X}^{(k+1)}_{n,m+1} - X^{(k)}_{n,m+1}\|
        + h \varepsilon^{(k)}_{f} + \varepsilon^{(k)}_{r}.
    \end{align*}
    Since we apply the low rank truncation $\mathcal{T}_{\varepsilon^{(k+1)}_{s}}$ on $\widehat{X}^{(k+1)}_{n,m+1}$,
    \begin{align*}
        \| X^{(k+1)}_{n,m+1} - \widehat{X}^{(k+1)}_{n,m+1} \| = \|  \mathcal{T}_{\varepsilon^{(k+1)}_{s}}(\widehat{X}^{(k+1)}_{n,m+1}) -  \widehat{X}^{(k+1)}_{n,m+1}  \|
        \le \varepsilon^{(k+1)}_{s},
    \end{align*}
    we have that
    \begin{align*}
        (1 - h L) \| \widetilde{X}^{(k+1)}_{n,m+1} - X^{(k+1)}_{n,m+1} \|
        \le{} &
        2 h L \|  \widetilde{X}^{(k+1)}_{n,m+1} - X^{(k)}_{n,m+1}\|
        + (1-h L) \varepsilon^{(k+1)}_{s} + h \varepsilon^{(k)}_{f} + \varepsilon^{(k)}_{r}
    \end{align*}
    Therefore, the claim \eqref{eq:claim} holds.

    Using \eqref{eq:star}, \eqref{eq:claim}, and the induction hypothesis, we can finally complete the proof of accuracy:
    \begin{align*}
        {}     & \| X^{(k+1)}_{n,m+1} - X(t_{n,m+1}) \|                                                                                                              \\
        \le{}  & \| X^{(k+1)}_{n,m+1} - \widetilde{X}^{(k+1)}_{n,m+1} \| + \| \widetilde{X}^{(k+1)}_{n,m+1} - X(t_{n,m+1}) \|
        \\
        \le{}  & \frac{2hL}{1-hL}
        \|  \widetilde{X}^{(k+1)}_{n,m+1} - X^{(k)}_{n,m+1}\|
        + \varepsilon^{(k+1)}_{s} + \frac{1}{1-h L}(h \varepsilon^{(k)}_{f} + \varepsilon^{(k)}_{r})
        + \| \widetilde{X}^{(k+1)}_{n,m+1} - X(t_{n,m+1}) \|                                                                                                         \\
        \le {} & \frac{2hL}{1-hL} \| X(t_{n,m+1}) - X^{(k)}_{n,m+1} \| + \varepsilon^{(k+1)}_{s} + \frac{1}{1-h L}(h \varepsilon^{(k)}_{f} + \varepsilon^{(k)}_{r})
        + \frac{1+hL}{1-hL} \|  \widetilde{X}^{(k+1)}_{n,m+1} - X(t_{n,m+1})\|
        \\
        \le{}  &
        \frac{2hL}{1-hL} C_k (\delta^{(k)} +  h^{\min(k+1,P+2)}) + \varepsilon^{(k+1)}_{s} + \frac{1}{1-h L}(h \varepsilon^{(k)}_{f} + \varepsilon^{(k)}_{r})        \\
               & + \frac{(1+h L)}{(1-h L)^2}
        \left(\|X(t_{n,m}) - X^{(k+1)}_{n,m}\| + C_A h^{P+2} + h(C_B+ L)C_k (\delta^{(k)} + h^{\min(k+1,P+2)})\right)
        \\
        \le{}  &
        \frac{2hL}{1-\gamma} C_k (\delta^{(k)} +  h^{\min(k+1,P+2)}) + \varepsilon^{(k+1)}_{s} + \frac{1}{1-\gamma}(h \varepsilon^{(k)}_{f} + \varepsilon^{(k)}_{r}) \\
               & + \frac{(1+\gamma)}{(1-\gamma)^2}
        \left(\|X(t_{n,m}) - X^{(k+1)}_{n,m}\| + C_A h^{P+2} + h(C_B+L)C_k(\delta^{(k)} + h^{\min(k+1,P+2)})\right)
    \end{align*}
    By induction on $m$, it follows that \eqref{eq:truncation} holds at the
    $(k+1)$-th level: there exists a constant $C_{k+1} > 0$ such that
    \begin{align*}
        \| X(t_{n,m}) - X^{(k+1)}_{n,m} \| \le C_{k+1}(\delta^{(k+1)} +  h^{\min(k+2,P+2)}),
    \end{align*}
    where $\delta^{(k+1)}$   satisfies
    \[
        \delta^{(k+1)} = \mathcal{O} \left(h \delta^{(k)} + \left(\varepsilon^{(k+1)}_{s} + h \varepsilon^{(k)}_{f} + \varepsilon^{(k)}_{r}\right)\right).
    \]
    By setting $k = K$,$m=P$ in \eqref{eq:truncation}, we can get
    \begin{align*}
        \| X_{n+1} - X(t_{n+1}) \| = \| X^{(K+1)}_{n,P+1} - X(t_{n,P+1}) \| \le C_{k+1} (\delta^{(K+1)} + (h^{\min(K+2,P+2)})).
    \end{align*}
    The proof is complete.
\end{proof}

\begin{remark}
    By selecting $P=K$, we can derive
    \[
        \begin{aligned}
             & \varepsilon_{s} = \mathcal{O}(h^2), \quad \varepsilon_{f} = \mathcal{O}(h), \quad
            \varepsilon^{(k+1)}_{s} = \varepsilon^{(k)}_{r} = \mathcal{O}(h^{k+2}), \quad \varepsilon^{(k)}_{f} = \mathcal{O}(h^{k+1}), \quad k=1,\dots,K
        \end{aligned}
    \]
    is needed to obtain $(K+1)$-th order accuracy
    \[
        \| X_{n+1} - X(t_{n+1}) \| = \mathcal{O}(h^{K+2}).
    \]
\end{remark}

\section{Numerical results}\label{sec:numerical-results}

In this section, we present numerical results that illustrate the performance of the SDC-mBUG method.
In all numerical tests, we impose periodic boundary conditions on the solution domain, $\Omega=[-2\pi,2\pi]^2.$
We use Fourier collocation methods \cite{trefethen2000spectral}  to discretize the spatial variables.
The mesh size is $(N_x, N_y) = (200,200)$ to ensure temporal error dominates.
We compare the numerical solutions obtained by the SDC-mBUG method with the (full-rank) reference solutions obtained via the classical fourth-order Runge-Kutta scheme on a refined mesh, specifically
the time evolution of the $L^2$ error and the numerical rank.
Hard and soft truncation are compared.
The truncation coefficients used in the SDC-mBUG scheme for these numerical examples are as follows,
\[
    \begin{aligned}
        \varepsilon_{s} = C h^2, \quad \varepsilon_{f} = C h, \quad
        \varepsilon^{(k)}_{f} = C h^{k+1},\quad \varepsilon^{(k+1)}_{s} = \varepsilon^{(k)}_{r} = C h^{k+2}, \quad k=1,\dots,K,
    \end{aligned}
\]
where
\[
    C = 2 \left( \frac{4\pi}{N_x} + \frac{4\pi}{N_y} \right)^{-1}.
\]
We use the following notations: SDC-mBUG-X-H/S denotes the X-order SDC-mBUG scheme with hard/soft truncation, respectively.

To examine the rank evolution of the reference solution,
we perform low-rank truncation with the same  thresholds on the exact solution or full-rank reference solutions to obtain the corresponding ranks, denoted as Ref-X-H/S, where
Ref-3-H/S represents that the hard/soft truncation with a tolerance of $C h^4$ applied to the full-rank reference solutions, and Ref-4-H/S represents  hard/soft truncation with a tolerance of $C h^5$ applied to the reference solutions.





\begin{example}[Manufactured solution]\label{newEg1}
    We consider the following equation
    \[
        u_t - y u_x + x u_y = d(u_{xx} + u_{yy}) + \varphi,
    \]
    where the source term
    \[
        \varphi(x,y,t) = (6d - 4xy - 4d(x^2 + 9y^2)) \exp(-(x^2 + 3y^2 + 2dt)),
    \]
    and the diffusion coefficient $d = 1/5$.
    We start from the rank-1 initial data $u(x,y,0) = \exp(-(x^2 + 3y^2))$,
    The exact solution is $u(x,y,t) = \exp(-(x^2 + 3y^2 + 2d\,t))$, which is of rank 1 at all times. 

    We evolve the solution until time $T = \pi$ with a time step $\Delta t = \pi / N_t$.
    The errors and rates of convergence can be found in Table~\ref{tab:newEg1-L2-combined},
    indicating that both hard and soft truncations achieve the expected order of accuracy.
    However, the numerical error obtained with hard truncation is significantly smaller than that with soft truncation.
    This is because the exact solution is rank 1, i.e.\
    the first singular value is of order $\mathcal{O}(1)$, while all others are zero. Therefore, the hard truncation which does not modify the dominant singular values is more accurate. We note that for all other numerical examples in the paper that are not manufactured solutions, the soft and hard truncation yields similar errors.
    Figure~\ref{fig:newEg1} gives the rank evolution of the numerical solution,
    showing that the hard truncation always keeps the numerical rank at 1,  but the soft truncation may lead to a rank of 2.

\end{example}

\begin{table}[htbp]
    \caption{Example~\ref{newEg1}. $L^2$ errors and rates of convergence}\label{tab:newEg1-L2-combined}
    \centering
    \begin{subtable}{\textwidth}
        \centering
        \begin{tabular}{c|c|c|c|c}
            \hline
            \multicolumn{5}{c}{Hard truncation}                                               \\
            \hline
            $N_t$      & 40       & 80                & 160               & 320               \\
            \hline
            SDC-mBUG-2 & 6.12E-05 & 1.68E-05\, (1.86) & 4.39E-06\, (1.94) & 1.12E-06\, (1.98) \\
            \hline
            SDC-mBUG-3 & 4.89E-07 & 7.63E-08\, (2.68) & 1.05E-08\, (2.86) & 1.43E-09\, (2.88) \\
            \hline
            SDC-mBUG-4 & 7.71E-09 & 1.01E-09\, (2.94) & 1.26E-10\, (3.00) & 5.41E-12\, (4.54) \\
            \hline
            \multicolumn{5}{c}{Soft truncation}                                               \\
            \hline
            $N_t$      & 40       & 80                & 160               & 320               \\
            \hline
            SDC-mBUG-2 & 4.50E-02 & 1.20E-02\, (1.91) & 2.96E-03\, (2.01) & 7.37E-04\, (2.01) \\
            \hline
            SDC-mBUG-3 & 7.97E-03 & 1.01E-03\, (2.98) & 1.30E-04\, (2.97) & 1.66E-05\, (2.96) \\
            \hline
            SDC-mBUG-4 & 1.09E-03 & 6.96E-05\, (3.97) & 4.43E-06\, (3.97) & 2.69E-07\, (4.04) \\
            \hline
        \end{tabular}
    \end{subtable}
\end{table}

\begin{figure}[htbp]
    \centering
    \begin{subfigure}[b]{0.47\textwidth}
        \centering
        \includegraphics[width=\textwidth]{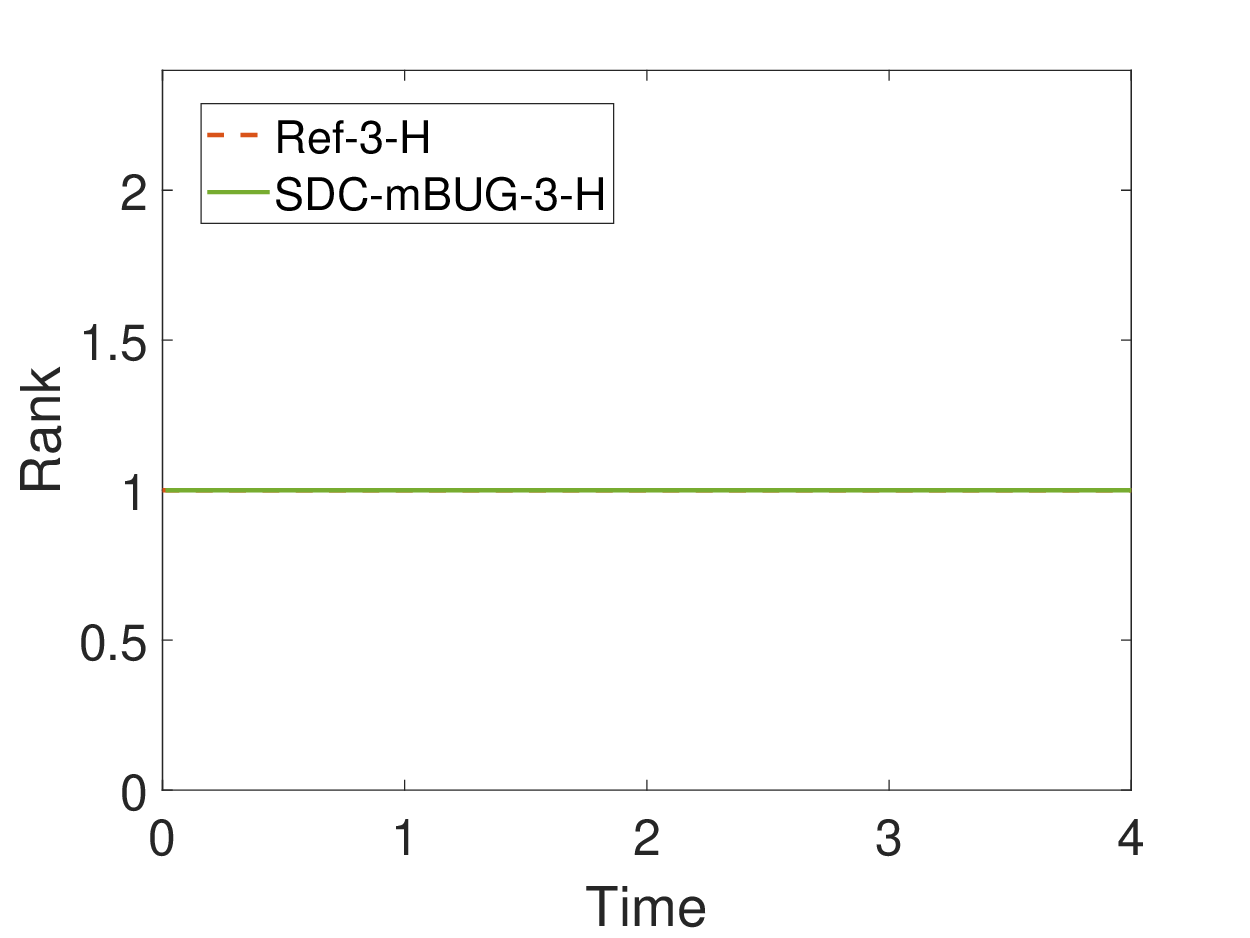}
    \end{subfigure}
    \begin{subfigure}[b]{0.47\textwidth}
        \centering
        \includegraphics[width=\textwidth]{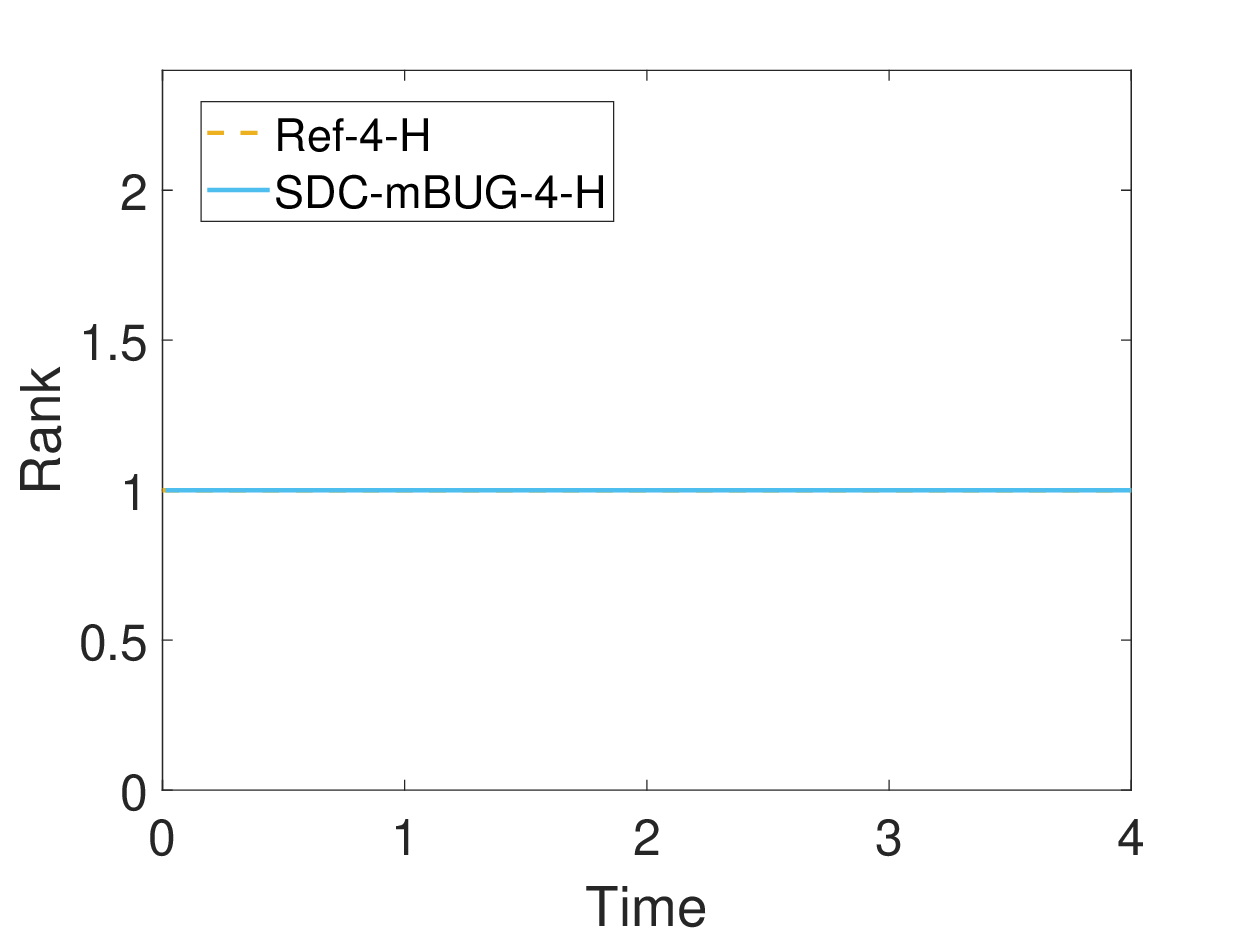}
    \end{subfigure}
    \begin{subfigure}[b]{0.47\textwidth}
        \centering
        \includegraphics[width=\textwidth]{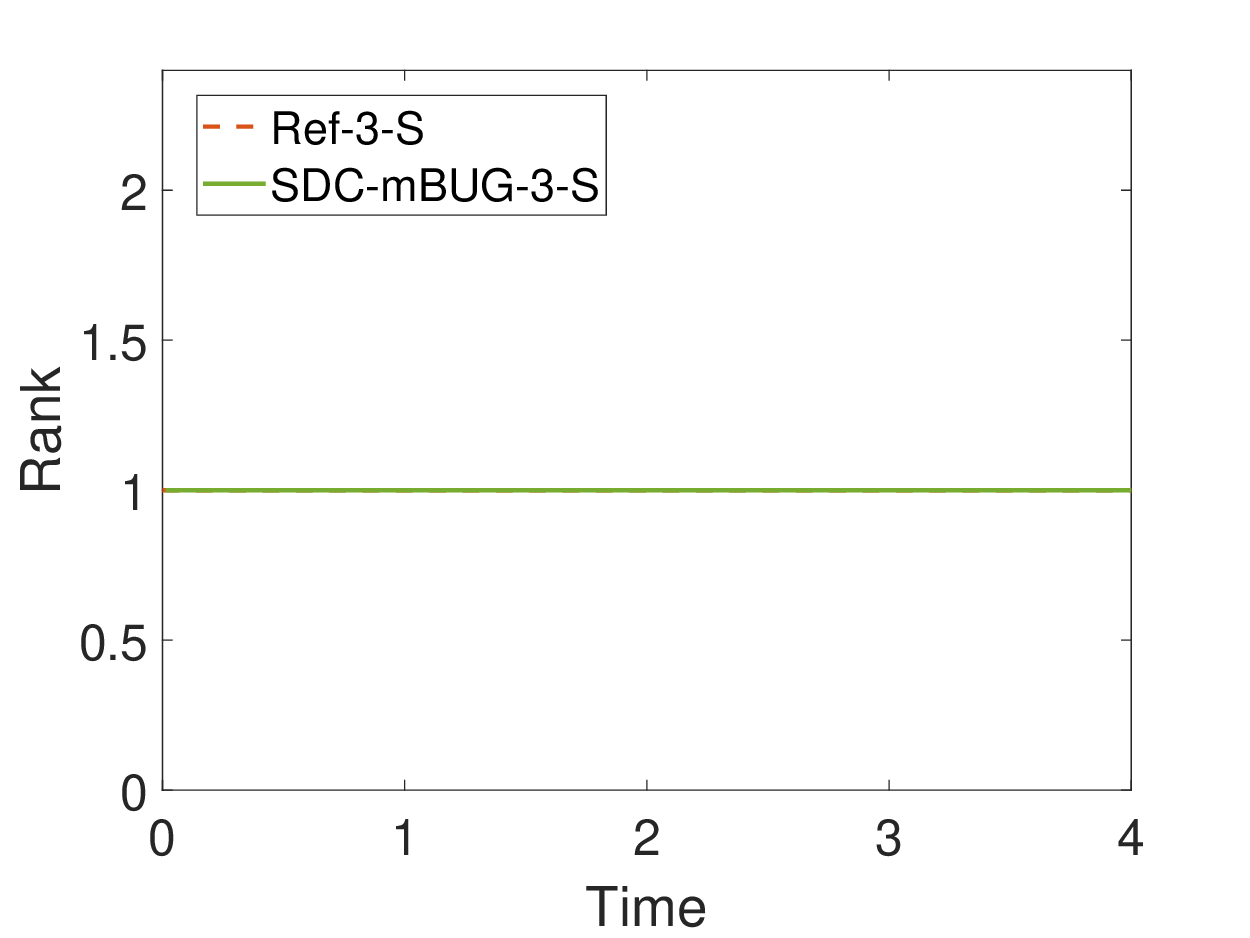}
    \end{subfigure}
    \begin{subfigure}[b]{0.47\textwidth}
        \centering
        \includegraphics[width=\textwidth]{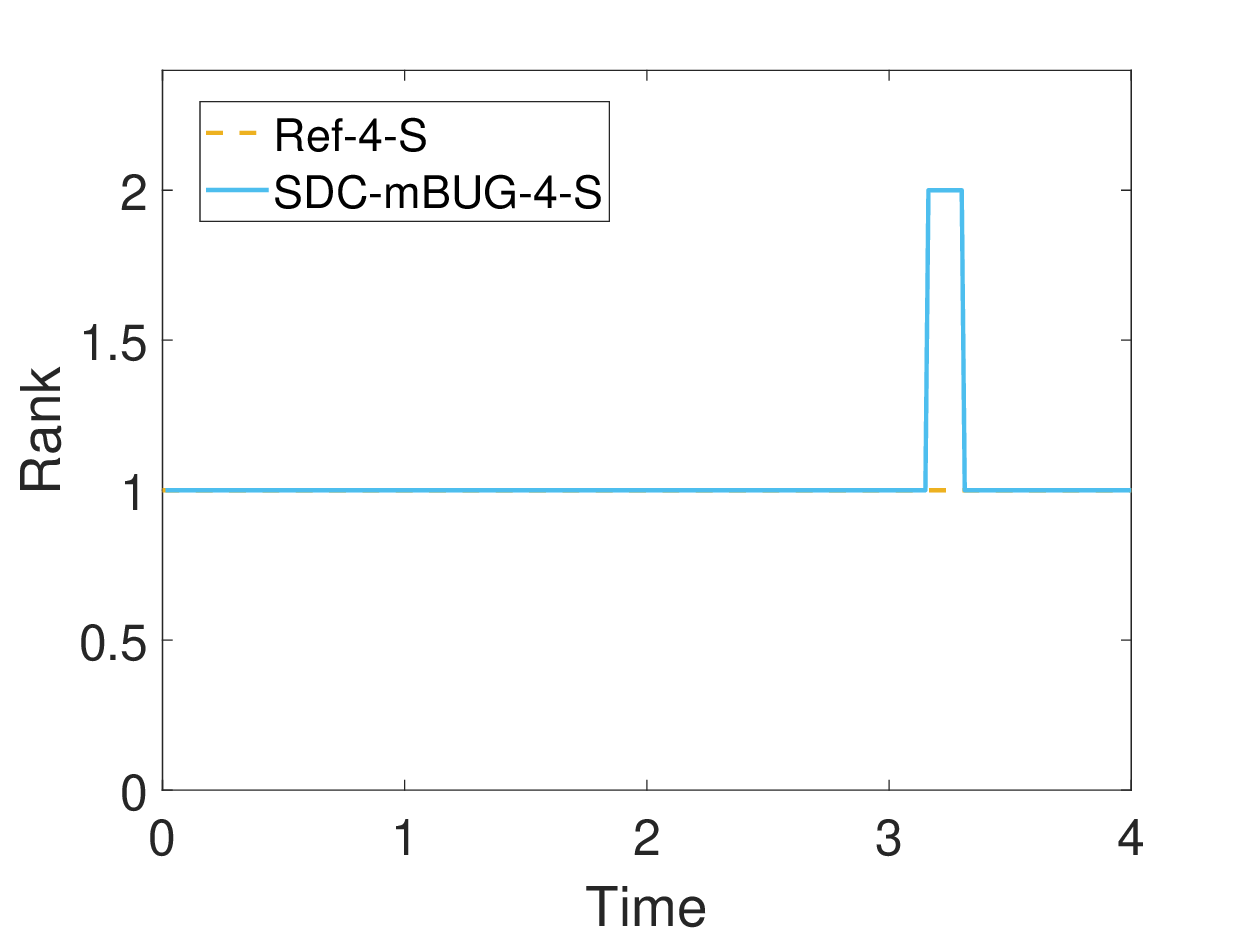}
    \end{subfigure}
    \caption{Example~\ref{newEg1}.
        The rank evolution of the numerical solutions obtained by the SDC-mBUG schemes over time is shown,
        where the dashed line represents the rank evolution of the reference solution.
        Top: hard truncation; bottom: soft truncation.
        Left: the third-order SDC-mBUG scheme;
        right: the fourth-order SDC-mBUG scheme.}
    \label{fig:newEg1}
\end{figure}


\begin{example}[Schrödinger equation]\label{newEg3}
    In this example, we consider a Schrödinger equation as in \cite{kieri2016discretized},
    \[
        \begin{aligned}
            i \partial_t u(x,y,t) & = -\frac12 \nabla^2 u(x,y,t) + \frac12
            \begin{pmatrix}
                x & y
            \end{pmatrix}
            \begin{pmatrix}
                2 & -1 \\ -1 & 3
            \end{pmatrix}
            \begin{pmatrix}
                x \\ y
            \end{pmatrix},
            \\
            u(x,y,0)              & = \frac{1}{\sqrt{\pi}} \exp(\frac12 x^2 + \frac12(y-1)^2).
        \end{aligned}
    \]
    We choose the final time $T=2.0$.
    A reference solution is computed using a mesh $(N_x, N_y, N_t)=(400,400,10000)$.

    In Table~\ref{tab:newEg3-L2-combined}, we display the errors and rates of convergence. Both methods achieve the desired order of accuracy, and the numerical errors between the two are very similar.
    We also track the rank of the solution as a function of time, which can be found in Figure~\ref{fig:newEg3}.
    The rank of the exact solution increases over time, and the rank of the numerical solution follows this trend while maintaining the corresponding accuracy.
\end{example}

\begin{table}[htbp]
    \caption{Example~\ref{newEg3}. $L^2$ errors and rates of convergence}\label{tab:newEg3-L2-combined}
    \centering
    \begin{subtable}{\textwidth}
        \centering
        \begin{tabular}{c|c|c|c|c}
            \hline
            \multicolumn{5}{c}{Hard truncation}                                               \\
            \hline
            $N_t$      & 50       & 100               & 200               & 400               \\
            \hline
            SDC-mBUG-2 & 1.17E-01 & 4.43E-02\, (1.40) & 1.31E-02\, (1.75) & 3.42E-03\, (1.94) \\
            \hline
            SDC-mBUG-3 & 1.10E-02 & 1.62E-03\, (2.76) & 2.09E-04\, (2.96) & 2.63E-05\, (2.99) \\
            \hline
            SDC-mBUG-4 & 8.93E-04 & 5.62E-05\, (3.99) & 3.43E-06\, (4.03) & 2.41E-07\, (3.83) \\
            \hline
            \multicolumn{5}{c}{Soft truncation}                                               \\
            \hline
            $N_t$      & 50       & 100               & 200               & 400               \\
            \hline
            SDC-mBUG-2 & 1.23E-01 & 4.60E-02\, (1.42) & 1.36E-02\, (1.76) & 3.54E-03\, (1.94) \\
            \hline
            SDC-mBUG-3 & 1.23E-02 & 1.81E-03\, (2.76) & 2.29E-04\, (2.98) & 2.85E-05\, (3.01) \\
            \hline
            SDC-mBUG-4 & 1.14E-03 & 7.05E-05\, (4.02) & 4.20E-06\, (4.07) & 2.79E-07\, (3.91) \\
            \hline
        \end{tabular}
    \end{subtable}
\end{table}

\begin{figure}[htbp]
    \centering
    \begin{subfigure}[b]{0.47\textwidth}
        \centering
        \includegraphics[width=\textwidth]{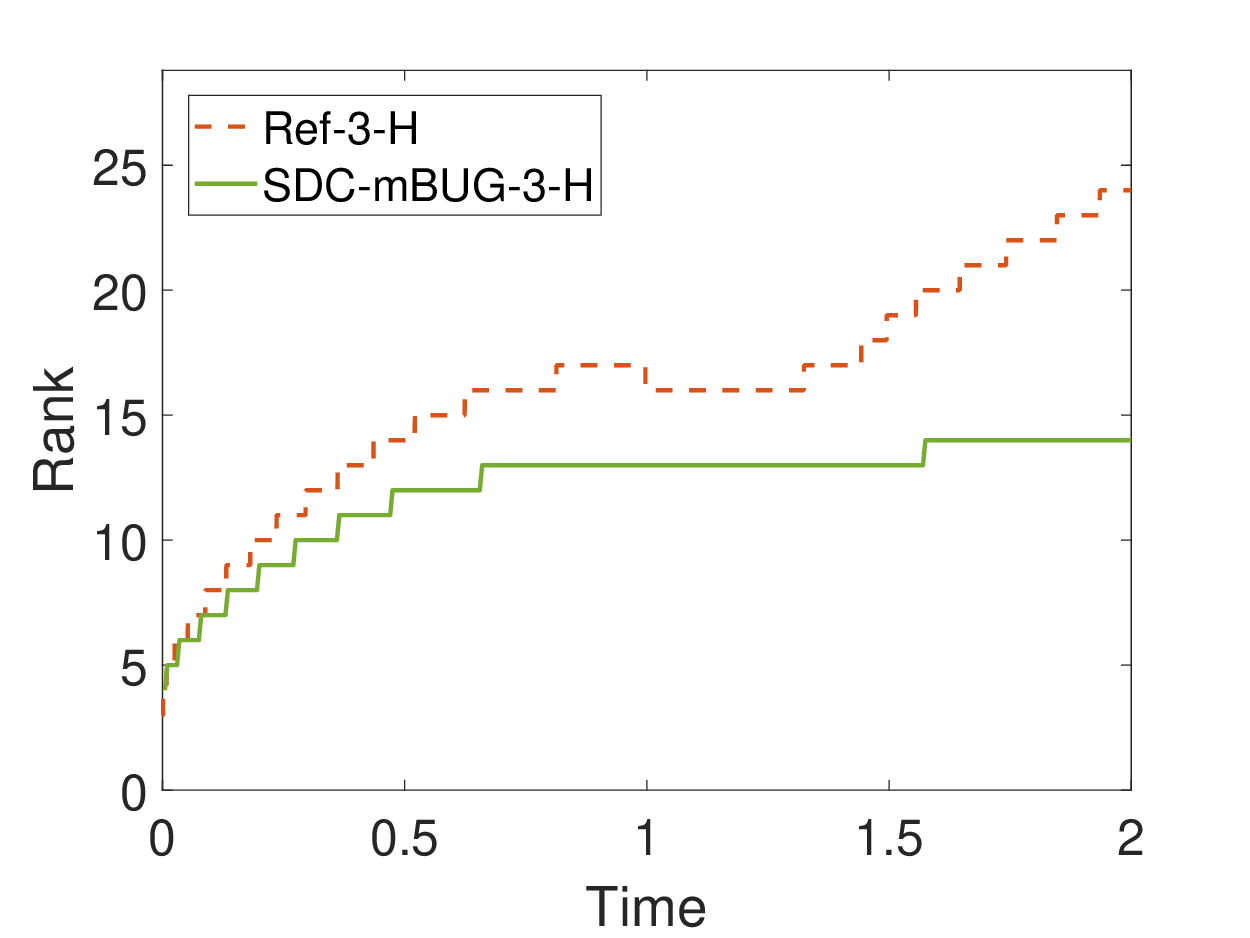}
    \end{subfigure}
    \begin{subfigure}[b]{0.47\textwidth}
        \centering
        \includegraphics[width=\textwidth]{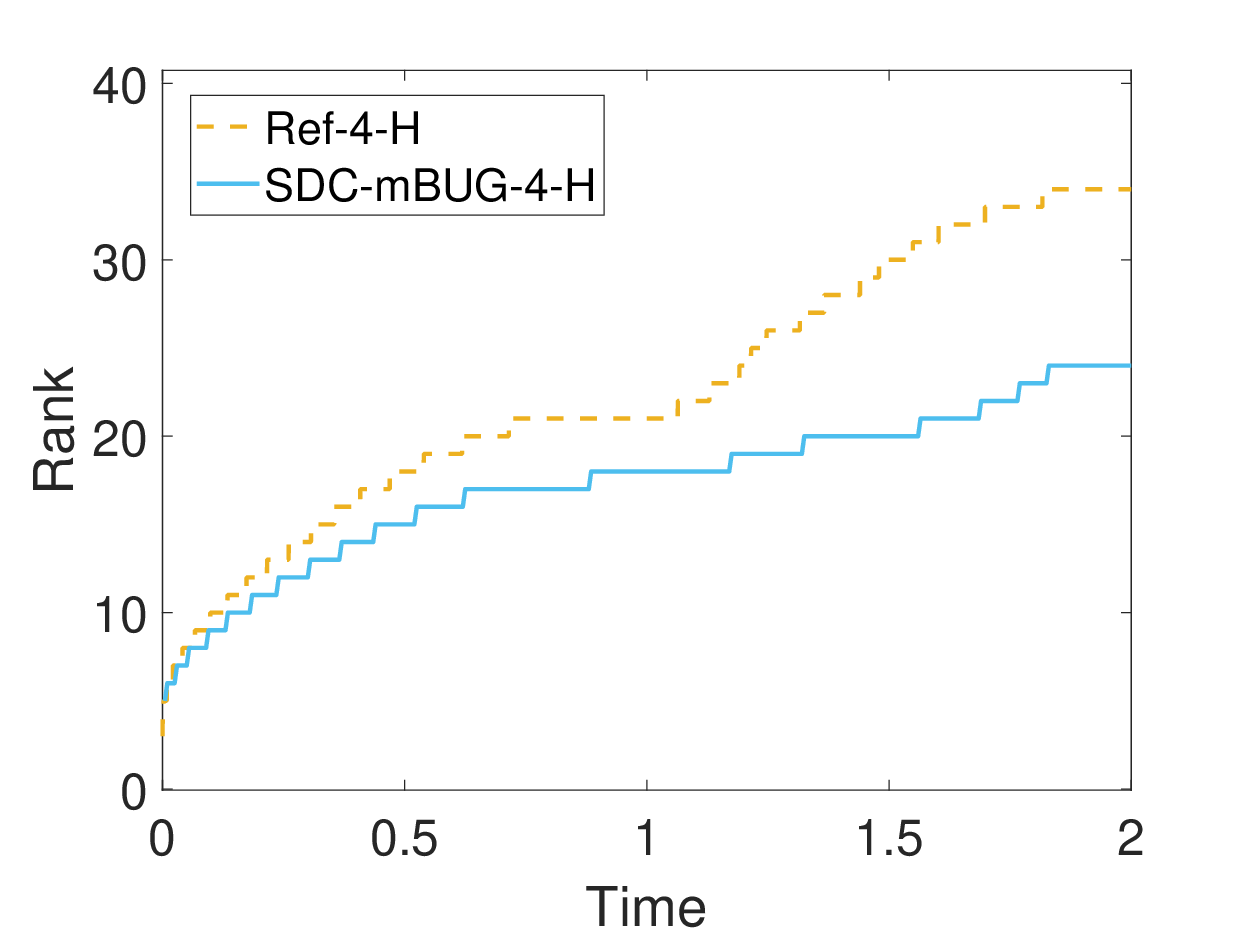}
    \end{subfigure}
    \begin{subfigure}[b]{0.47\textwidth}
        \centering
        \includegraphics[width=\textwidth]{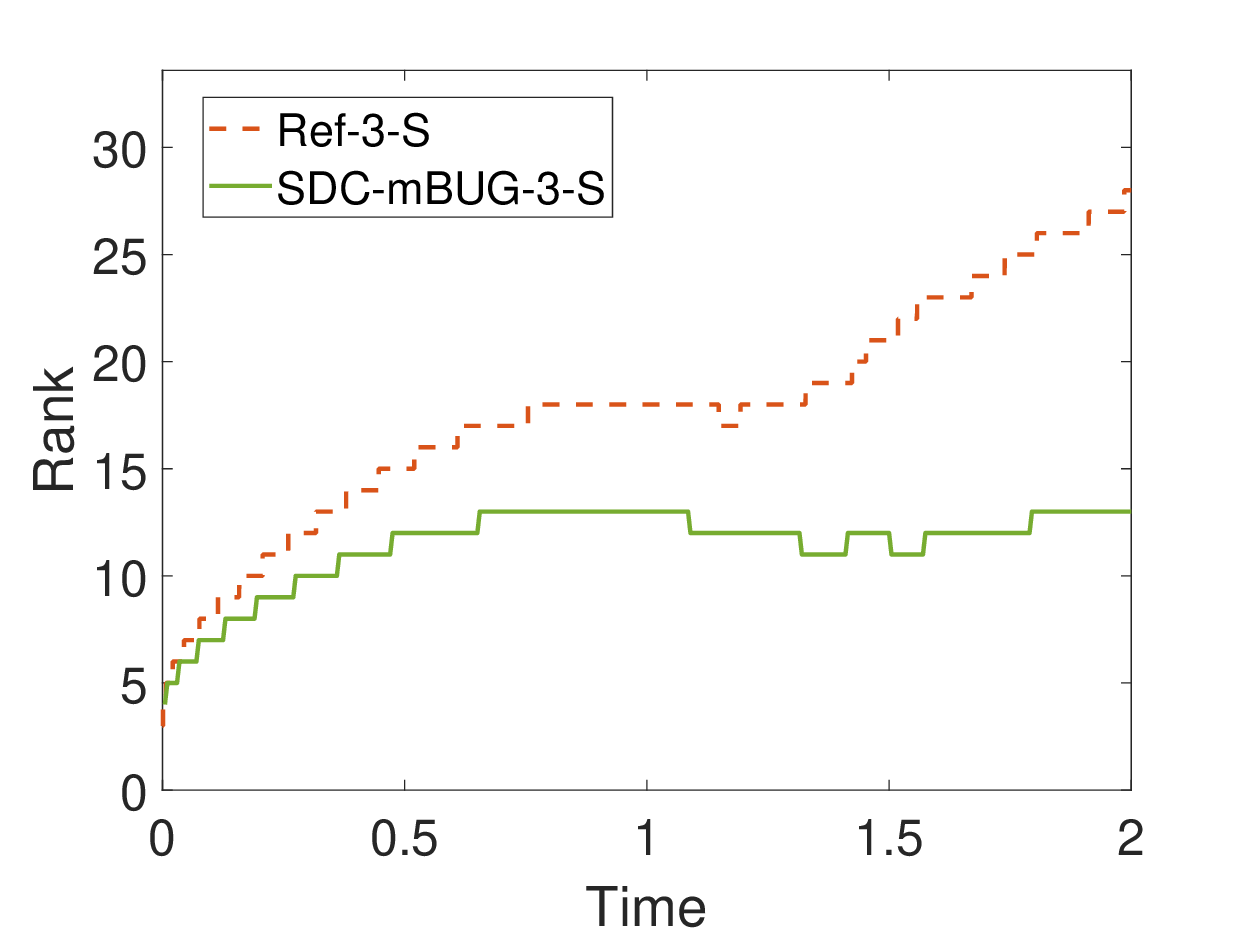}
    \end{subfigure}
    \begin{subfigure}[b]{0.47\textwidth}
        \centering
        \includegraphics[width=\textwidth]{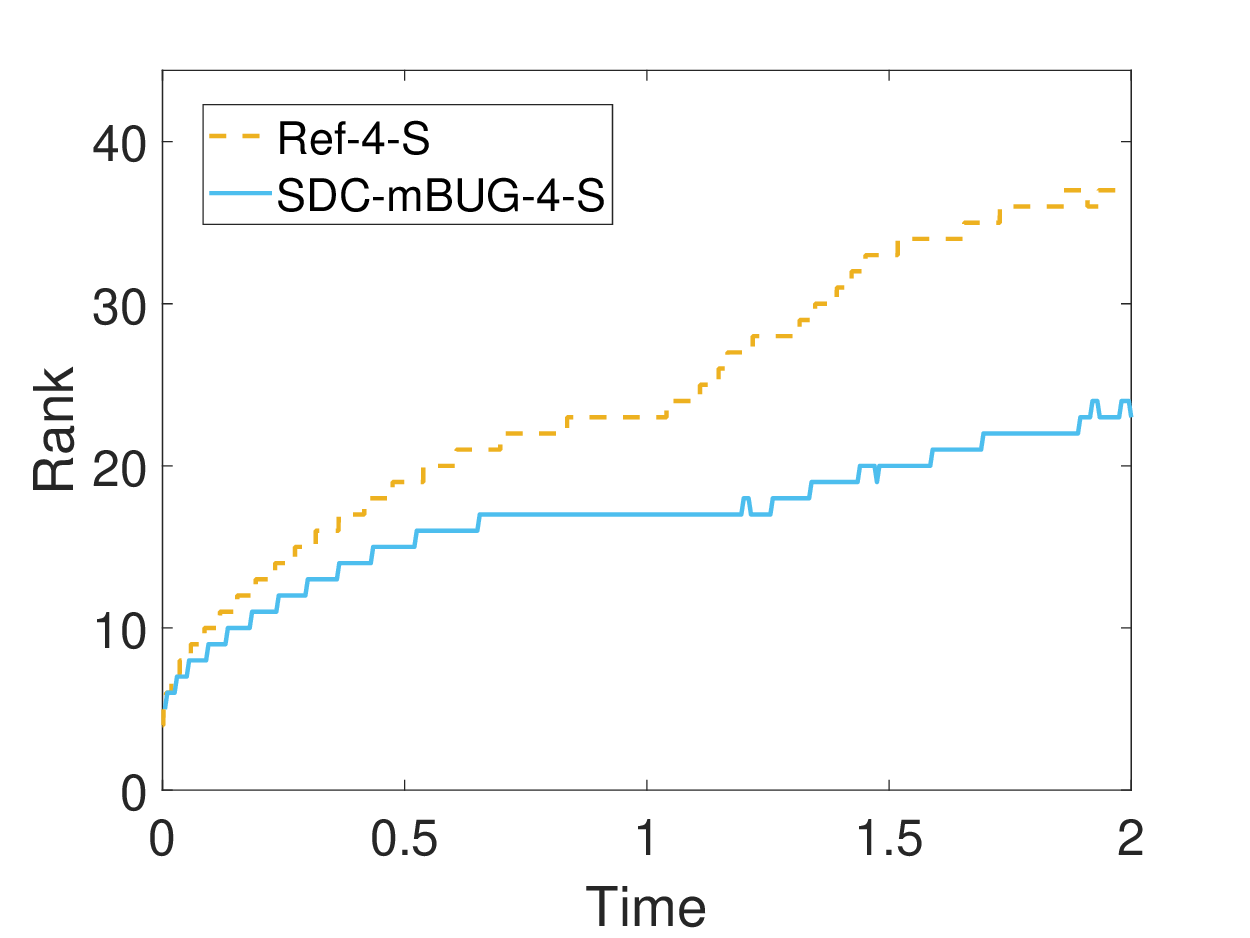}
    \end{subfigure}
    \caption{Example~\ref{newEg3}.
        The rank evolution of the numerical solutions obtained by the SDC-mBUG schemes over time is shown,
        where the dashed line represents the rank evolution of the reference solution.
        Top: hard truncation; bottom: soft truncation.
        Left: the third-order SDC-mBUG scheme;
        right: the fourth-order SDC-mBUG scheme. }
    \label{fig:newEg3}
\end{figure}


\begin{example}[Rotation with anisotropic diffusion]\label{newEg4}
    Next, we consider solid body rotation, with anisotropic diffusion.
    \[
        u_t - y u_x + x u_y = d R(x,y,t,u),
    \]
    where $d = 0.01$,
    \[
        \begin{aligned}
            R(x,y,t,u) ={} & b_1(y)\partial_x\left(a_1(x) \partial_x u\right)
            + b_2(y) \partial_{xy} \left(a_2(x) u\right)                      \\
                           & + a_3(x) \partial_{xy} \left(b_3(y) u\right)
            + a_4(x) \partial_y \left(b_4(y) u\right).
        \end{aligned}
    \]
    The diffusion coefficients are:
    \[
        \begin{aligned}
            a_1(x) & = 1+0.1\sin(0.5 x),    & b_1(y) & = 1+0.1\cos(0.5 x),    \\
            a_2(x) & = 0.15+0.1\sin(0.5 x), & b_2(y) & = 0.15+0.1\cos(0.5 x), \\
            a_3(x) & = 0.15+0.1\cos(0.5 x), & b_3(y) & = 0.15+0.1\sin(0.5 x), \\
            a_4(x) & = 1+0.1\sin(0.5 x),    & b_4(y) & = 1+0.1\cos(0.5 x).
        \end{aligned}
    \]

    We take the rank-1 initial data as
    \[
        u(x,y,0) = \exp(-(x^2+9y^2)),
    \]
    and evolve the solution until time $T=\pi$.
    The full-rank reference solution is computed on a mesh $(N_x, N_y, N_t)=(400,400,12000)$. 
    Table~\ref{tab:newEg4-L2-combined} demonstrates that the SDC-mBUG schemes achieve the corresponding order of accuracy.  Numerical errors from hard and soft truncations are very similar.
    Figure~\ref{fig:newEg4} shows that the numerical solutions always maintain a low-rank structure.
    However, compared to hard truncation, the rank of the results with the soft truncation algorithm fits better than that of the reference solution over time.
    This is because the singular value sequence in the current example exhibits smooth decay.
    Hard truncation introduces sudden changes in the singular value sequence, whereas soft truncation weakens this effect.
\end{example}

\begin{table}[htbp]
    \caption{Example~\ref{newEg4}. $L^2$ errors and rates of convergence.} \label{tab:newEg4-L2-combined}
    \centering
    \begin{subtable}{\textwidth}
        \centering
        \begin{tabular}{c|c|c|c|c}
            \hline
            \multicolumn{5}{c}{Hard truncation}                                               \\
            \hline
            $N_t$      & 100      & 200               & 400               & 600               \\
            \hline
            SDC-mBUG-2 & 4.09E-03 & 1.06E-03\, (1.95) & 2.71E-04\, (1.97) & 1.19E-04\, (2.02) \\
            \hline
            SDC-mBUG-3 & 1.27E-04 & 1.36E-05\, (3.22) & 1.77E-06\, (2.94) & 5.84E-07\, (2.74) \\
            \hline
            SDC-mBUG-4 & 3.71E-06 & 2.63E-07\, (3.82) & 1.78E-08\, (3.88) & 4.08E-09\, (3.64) \\
            \hline
            \multicolumn{5}{c}{Soft truncation}                                               \\
            \hline
            $N_t$      & 100      & 200               & 400               & 600               \\
            \hline
            SDC-mBUG-2 & 8.85E-03 & 2.26E-03\, (1.97) & 5.55E-04\, (2.03) & 2.45E-04\, (2.01) \\
            \hline
            SDC-mBUG-3 & 3.54E-04 & 4.14E-05\, (3.09) & 4.93E-06\, (3.07) & 1.44E-06\, (3.04) \\
            \hline
            SDC-mBUG-4 & 1.20E-05 & 6.57E-07\, (4.19) & 3.92E-08\, (4.07) & 7.78E-09\, (3.99) \\
            \hline
        \end{tabular}
    \end{subtable}
\end{table}

\begin{figure}[htbp]
    \centering
    \begin{subfigure}[b]{0.47\textwidth}
        \centering
        \includegraphics[width=\textwidth]{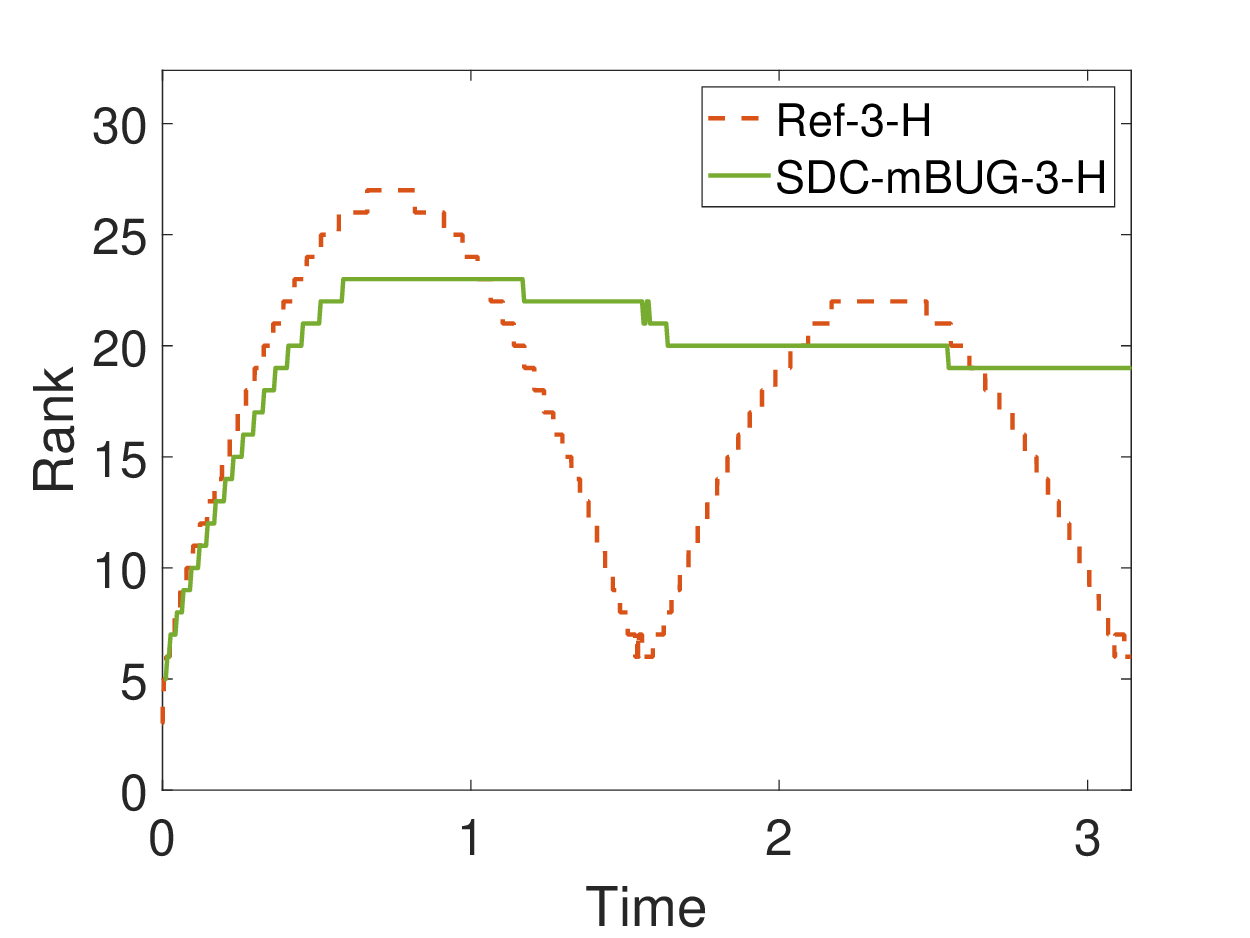}
    \end{subfigure}
    \begin{subfigure}[b]{0.47\textwidth}
        \centering
        \includegraphics[width=\textwidth]{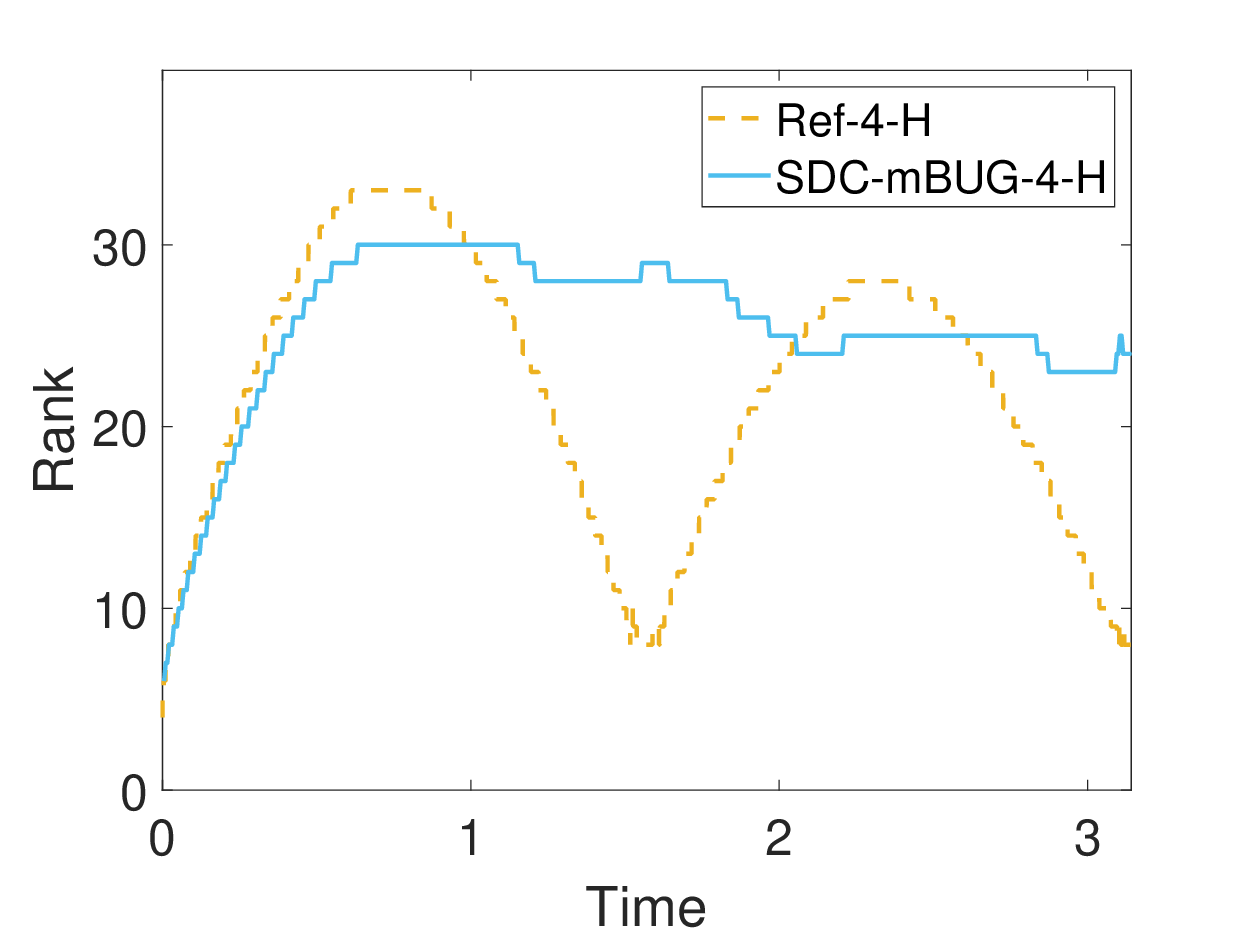}
    \end{subfigure}
    \begin{subfigure}[b]{0.47\textwidth}
        \centering
        \includegraphics[width=\textwidth]{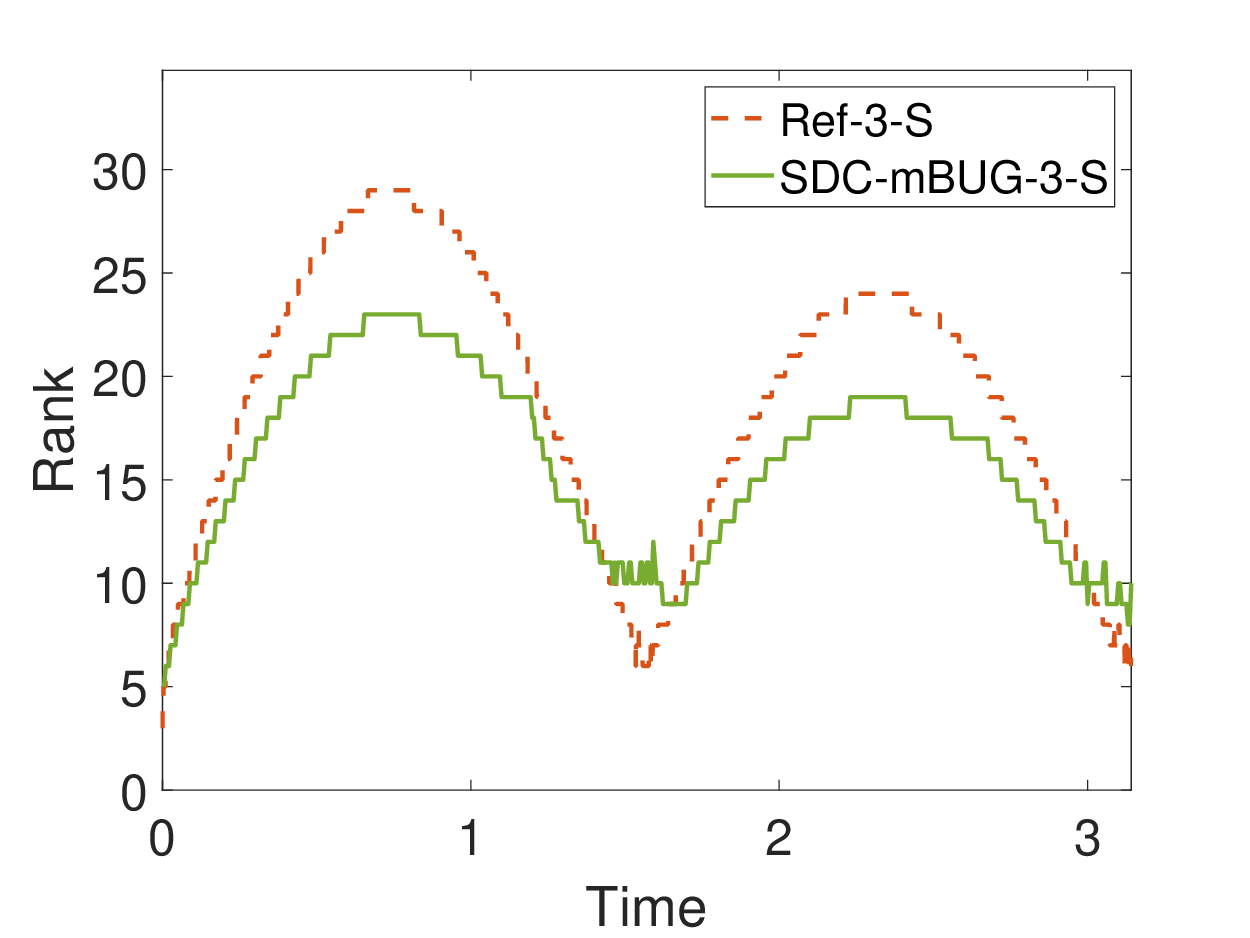}
    \end{subfigure}
    \begin{subfigure}[b]{0.47\textwidth}
        \centering
        \includegraphics[width=\textwidth]{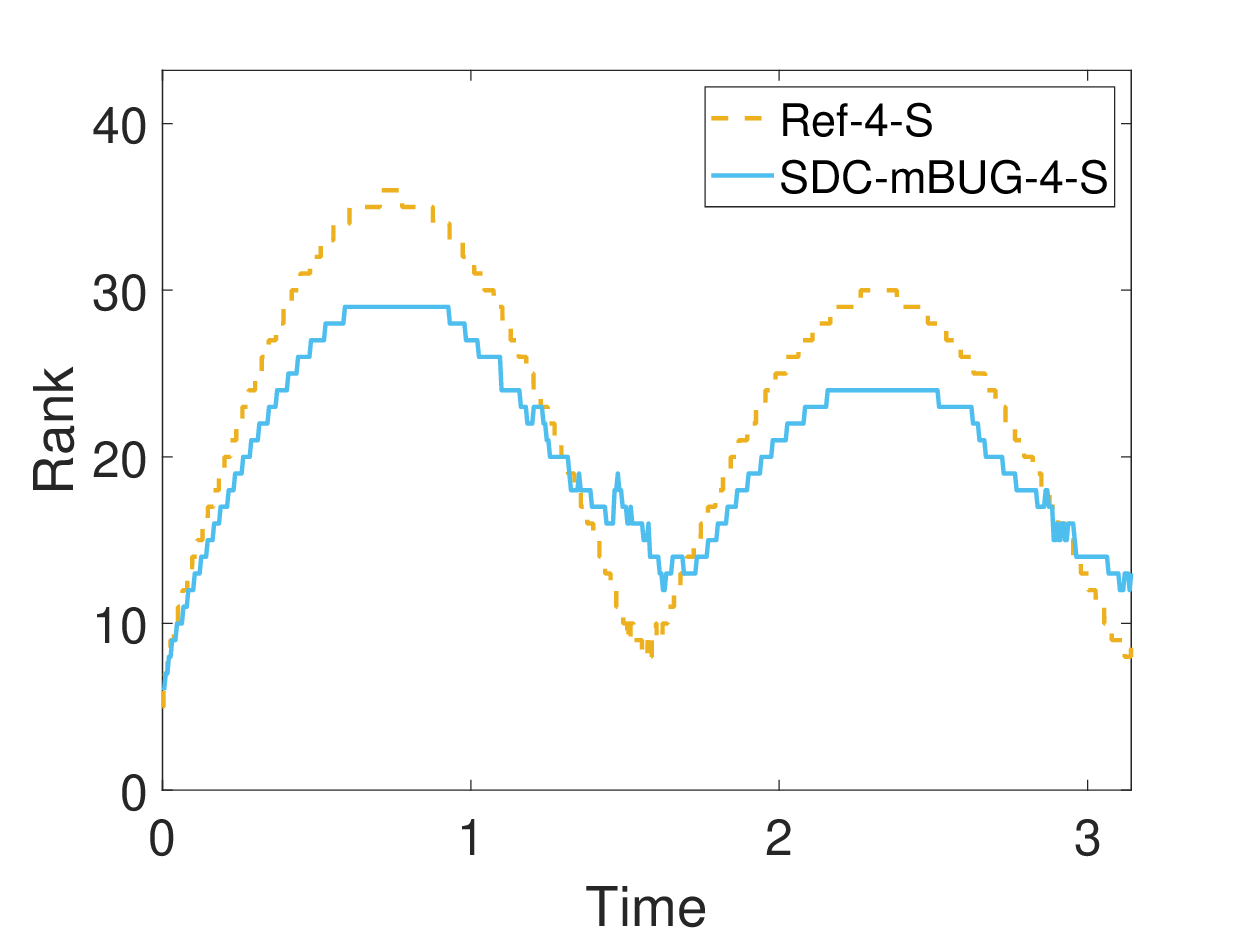}
    \end{subfigure}
    \caption{Example~\ref{newEg4}.
        The rank evolution of the numerical solutions obtained by the SDC-mBUG schemes over time is shown,
        where the dashed line represents the rank evolution of the reference solution.
        Top: hard truncation; bottom: soft truncation.
        Left: the third-order SDC-mBUG scheme;
        right: the fourth-order SDC-mBUG scheme. }
    \label{fig:newEg4}
\end{figure}


\begin{example}[Pure rotation]\label{newEg5}
    In this example, we consider rigid body rotation without diffusion
    \[
        u_t - y u_x + x u_y = 0.
    \]
    The exact solution can be obtained by rotating the initial value at a fixed angular velocity,
    resulting in the evolution of the rank of the exact solution showing strict periodicity.
    We take the initial data as
    \[
        u(x,y,0) = \exp(-(5x^2+5y^2+8xy)).
    \]

    We evolve the solution until time $T=\pi$.
    The errors and rates of convergence can be found in Table~\ref{tab:newEg5-L2-combined}.
    The rank of the solutions is shown in Figure~\ref{fig:newEg5}. 
    Similar to the example above, the SDC-mBUG scheme achieves the corresponding accuracy order with similar errors.
    The rank evolution of the numerical solution under hard truncation and soft truncation is also significantly different.
    For the soft truncation, the rank evolves in a periodic pattern, while the rank curve with hard truncation increases over time.
\end{example}

\begin{table}[htbp]
    \caption{Example~\ref{newEg5}. $L^2$ errors and rates of convergence.} \label{tab:newEg5-L2-combined}
    \centering
    \begin{subtable}{\textwidth}
        \centering
        \begin{tabular}{c|c|c|c|c}
            \hline
            \multicolumn{5}{c}{Hard truncation}                                               \\
            \hline
            $N_t$      & 100      & 200               & 400               & 600               \\
            \hline
            SDC-mBUG-2 & 1.34E-02 & 3.92E-03\, (1.77) & 1.04E-03\, (1.92) & 4.59E-04\, (2.01) \\
            \hline
            SDC-mBUG-3 & 4.51E-04 & 7.59E-05\, (2.57) & 1.04E-05\, (2.87) & 3.11E-06\, (2.97) \\
            \hline
            SDC-mBUG-4 & 2.21E-05 & 1.74E-06\, (3.67) & 1.37E-07\, (3.67) & 2.68E-08\, (4.02) \\
            \hline
            \multicolumn{5}{c}{Soft truncation}                                               \\
            \hline
            $N_t$      & 100      & 200               & 400               & 600               \\
            \hline
            SDC-mBUG-2 & 2.00E-02 & 5.60E-03\, (1.84) & 1.43E-03\, (1.97) & 6.37E-04\, (1.99) \\
            \hline
            SDC-mBUG-3 & 1.01E-03 & 1.26E-04\, (3.00) & 1.52E-05\, (3.05) & 4.51E-06\, (3.00) \\
            \hline
            SDC-mBUG-4 & 4.94E-05 & 3.13E-06\, (3.98) & 2.02E-07\, (3.95) & 4.16E-08\, (3.90) \\
            \hline
        \end{tabular}
    \end{subtable}
\end{table}

\begin{figure}[htbp]
    \centering
    \begin{subfigure}[b]{0.47\textwidth}
        \centering
        \includegraphics[width=\textwidth]{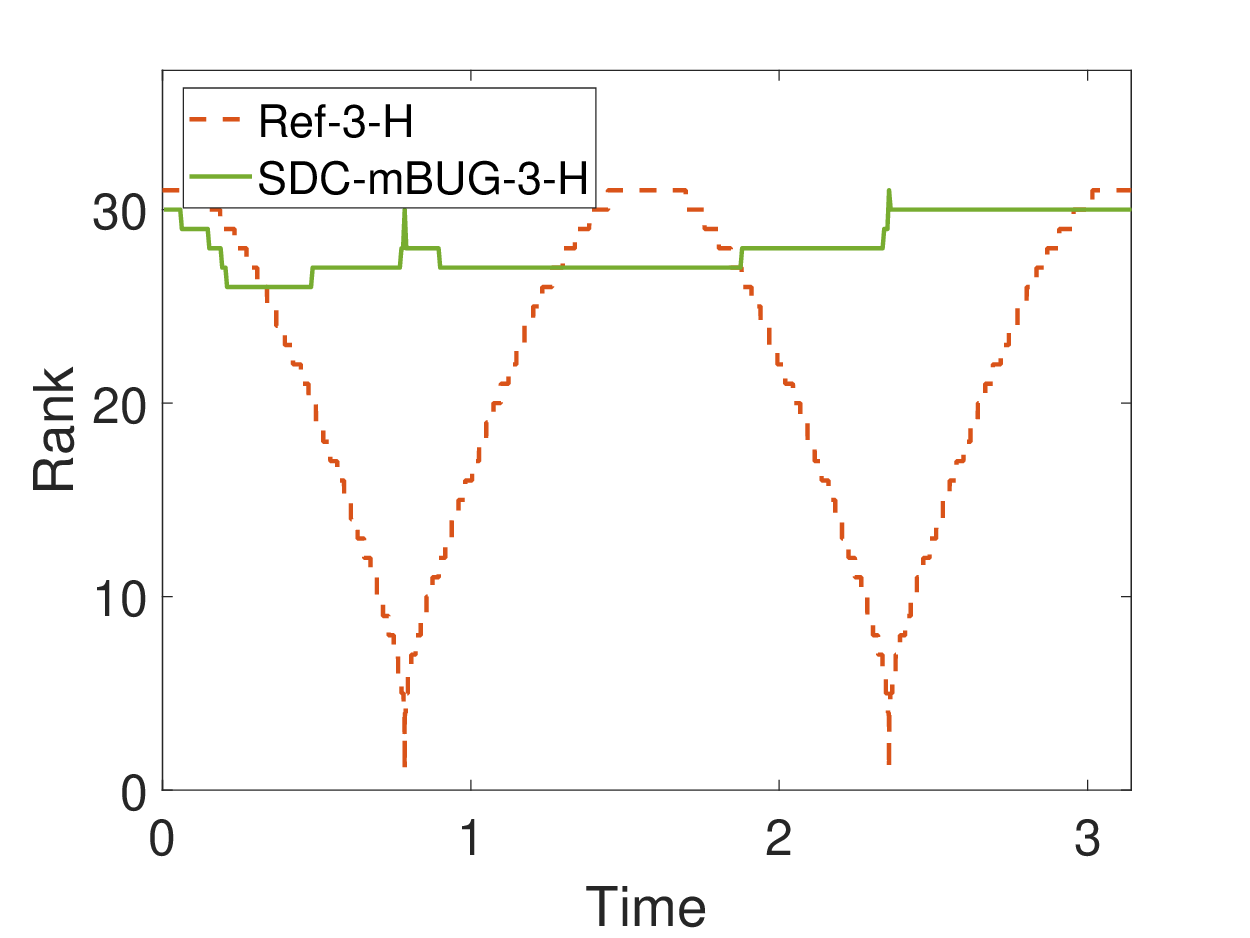}
    \end{subfigure}
    \begin{subfigure}[b]{0.47\textwidth}
        \centering
        \includegraphics[width=\textwidth]{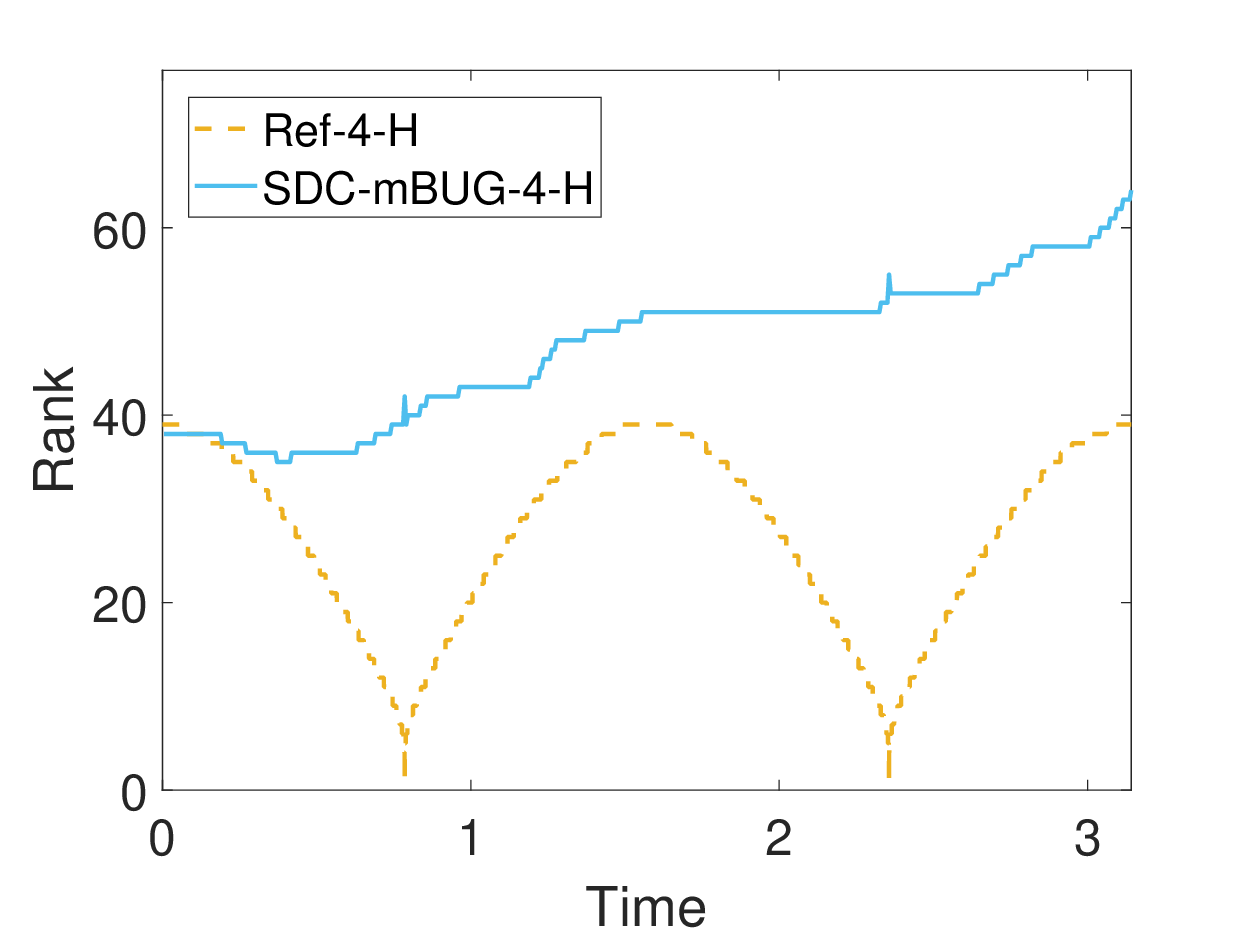}
    \end{subfigure}
    \begin{subfigure}[b]{0.47\textwidth}
        \centering
        \includegraphics[width=\textwidth]{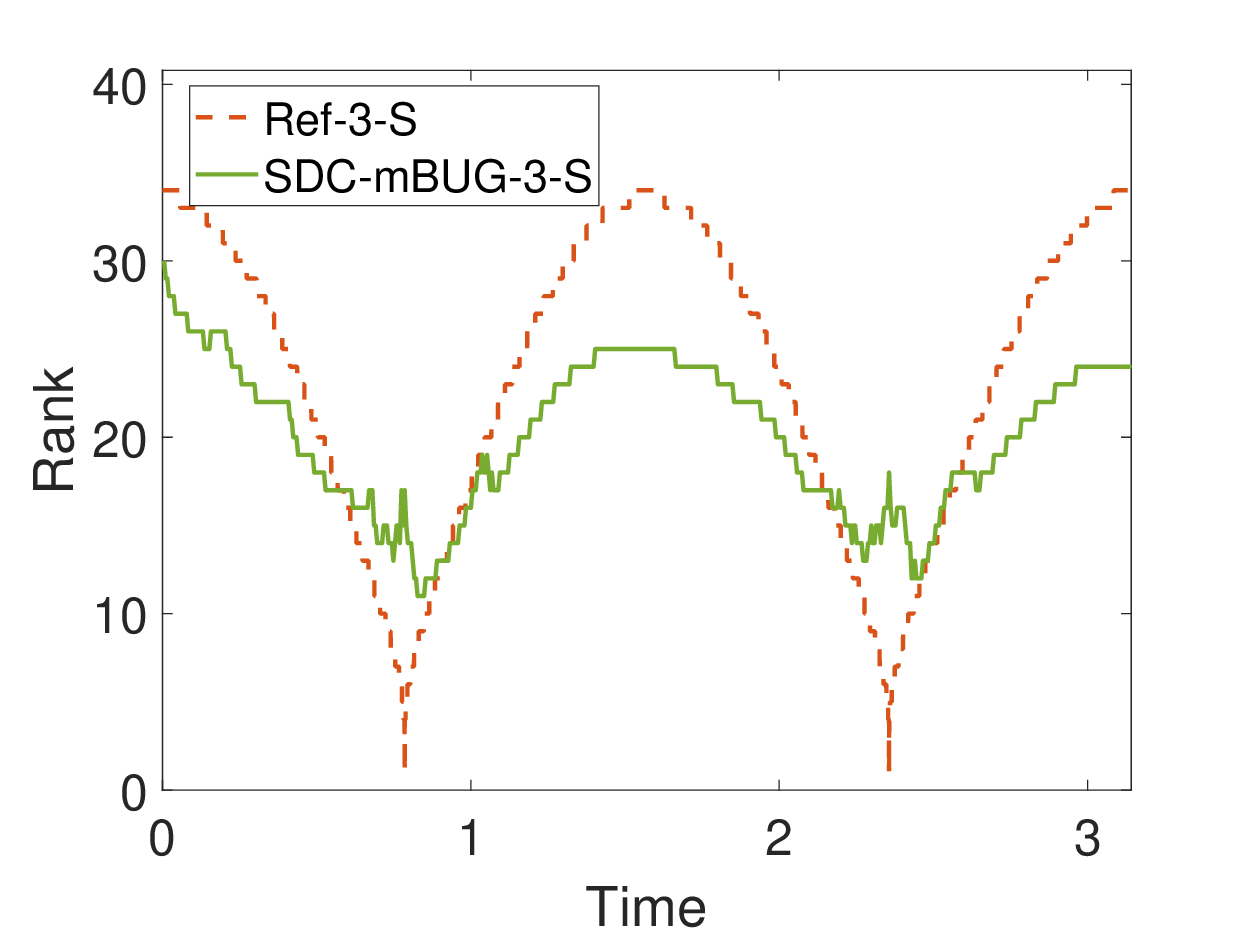}
    \end{subfigure}
    \begin{subfigure}[b]{0.47\textwidth}
        \centering
        \includegraphics[width=\textwidth]{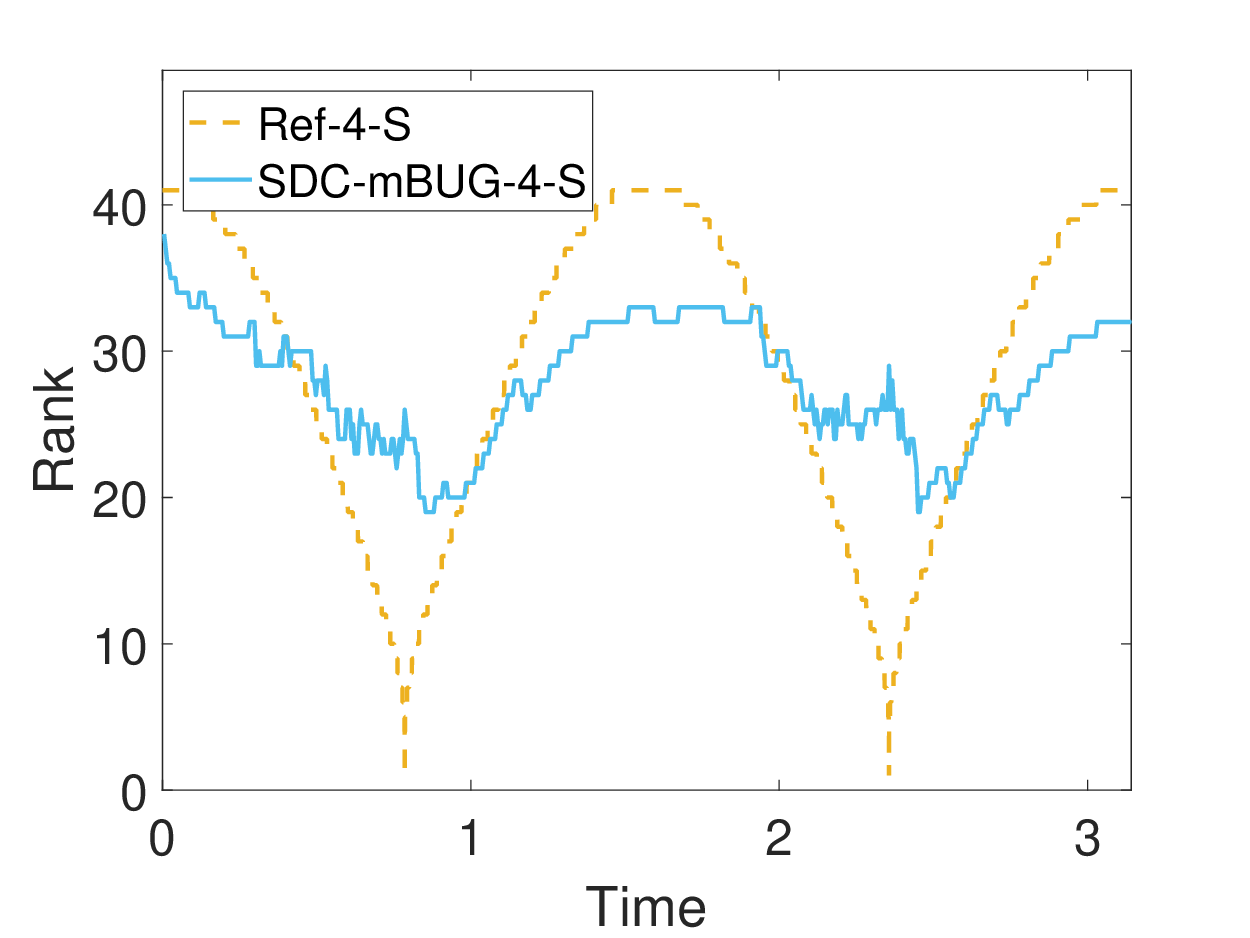}
    \end{subfigure}
    \caption{Example~\ref{newEg5}.
        The rank evolution of the numerical solutions obtained by the SDC-mBUG schemes over time is shown,
        where the dashed line represents the rank evolution of the reference solution.
        Top: hard truncation; bottom: soft truncation.
        Left: the third-order SDC-mBUG scheme;
        right: the fourth-order SDC-mBUG scheme. }
    \label{fig:newEg5}
\end{figure}


\section{Conclusions and future work}\label{sec:conclusions-and-future-work}

In this work, we design a high order low-rank scheme based on SDC with mBUG, which is a modified dynamic low-rank approximation.
By integrating SDC with the adaptive rank truncation strategy,
the method can achieve high temporal accuracy and rank control. We provide truncation error analysis which shows the convergence order and also motivates with choice of tolerance parameters across levels. In particular, to reduce the computational cost, the correction steps in SDC do not need to compute $K$- and $L$-steps.
We consider two types of truncation strategies: the hard truncation and the soft truncation.
Numerical results show that while both truncation strategies achieve the desired order of accuracy, there is a difference in numerical  error and rank control.
For problems with solutions with non-smoothly decaying singular value sequences, the hard truncation can yield more accurate approximations,
whereas the soft truncation tends to offer better rank control for problems with a smoothly decaying singular value sequence.
Future work will focus on further generalization to multidimensional cases and other truncation strategies.

\section*{Ethics declarations}
\noindent\textbf{Conflict of interest} \\
The authors declare that they have no conflict of interest.

\section*{Data Availability}
\noindent Data sets generated during the current study are available from the corresponding author on reasonable request.

\bibliographystyle{abbrv}
\bibliography{reference}

\end{document}